\documentclass[11pt, a4paper,pagebackref]{amsart}
\usepackage{amsmath}
\usepackage{amssymb}
\usepackage{amsthm}
\usepackage{amscd}
\usepackage{tikz-cd}
\usepackage{color}
\usepackage{hyperref}

\usepackage[inner=2.3cm,outer=2.3cm,bottom=3.3cm]{geometry}
\usepackage{setspace}

\usepackage{stmaryrd}
\numberwithin{equation}{section}

\usepackage[colorinlistoftodos]{todonotes}

\usepackage[all]{xy}
\usepackage{enumerate}
\usepackage{hyperref}

\usepackage{color}
\hypersetup{colorlinks=true} 

\newtheorem{theorem}{Theorem}[section]

\newtheorem{lemma}[theorem]{Lemma}
\newtheorem{proposition}[theorem]{Proposition}
\newtheorem{corollary}[theorem]{Corollary}

\theoremstyle{definition}
\newtheorem{definition}[theorem]{Definition}
\newtheorem{newclaim}[theorem]{}

\newtheorem{setup}[theorem]{Setup}
\newtheorem{remark}[theorem]{Remark}
\newtheorem{remark and definition}[theorem]{Remark and Definition}
\newtheorem{remark and notation}[theorem]{Remark and Notation}

\newtheorem*{funding}{Funding}
\newtheorem*{agra}{Acknowlegment}

\newtheorem{example}[theorem]{Example}

\newtheorem{question}[theorem]{Question}

\newcommand\Spec{\operatorname{Spec}}
\newcommand\Hom{\operatorname{Hom}}
\newcommand\Ext{\operatorname{Ext}}
\newcommand\Tor{\operatorname{Tor}}

\newcommand\depth{\operatorname{depth}}

\newcommand\Ker{\operatorname{\Ker}}

\newcommand\pd{\operatorname{pd}}
\newcommand{\ldt}{\otimes^\mathbf{L}}
\newcommand\id{\operatorname{id}}

\newcommand\RHom{\operatorname{\mathbf{R}Hom}}

\DeclareMathOperator{\Hdim}{\mathcal{H}-dim}

\DeclareMathOperator{\gdim}{G-dim}

\author[Ajani]{Tatheer Ajani}
\address{University of Texas at Arlington, Arlington, TX 76019, U.S.A.}
\email{tatheer.ajani@gmail.com}
\urladdr{https://sites.google.com/view/tatheerajani}

\author[Martins]{Paulo Martins}
\address{Universidade de S{\~a}o Paulo -
ICMC, Caixa Postal 668, 13560-970, S{\~a}o Carlos-SP, Brazil}
\email{paulomartinsmtm@gmail.com}
\urladdr{https://sites.google.com/view/martinsp}

\author[Mendoza-Rubio]{Victor D. Mendoza Rubio}
\address{Universidade de S{\~a}o Paulo -
ICMC, Caixa Postal 668, 13560-970, S{\~a}o Carlos-SP, Brazil}
\email{vicdamenru@gmail.com}
\urladdr{https://sites.google.com/view/vicdamenru}

\author[Soto Levins]{Andrew J. Soto Levins}
\address{Texas Tech University, TX 79409. U.S.A.}
\email{ansotole@ttu.edu}
\urladdr{https://sites.google.com/view/andrewjsotolevins}

\keywords{Vanishing of Ext, Auslander conditions, AB-dimension, Auslander bound, Gorenstein dimension, dualizing complex, complete intersection dimension, quasi-projective dimension}
\subjclass[2020]{13D05, 13D07, 13D09.}

\begin{document}

\title{Generalized symmetry in the vanishing of Ext}

\begin{abstract}
Foundational work by Avramov and Buchweitz, as well as by Huneke and Jorgensen, established symmetry results for the vanishing of Ext over local complete intersections and AB-rings, respectively. Later, several authors studied symmetry in the vanishing of Ext over local Gorenstein rings under additional homological assumptions on the modules involved. In this work, we provide a generalized symmetry in the vanishing of Ext for homologically finite complexes over a Noetherian ring with a dualizing complex, under additional homological conditions on the complexes involved. The results in this paper extend and unify several symmetry results on the vanishing of Ext previously proved over local Gorenstein rings and show that the dualizing complex has been hidden in these results.
\end{abstract}

\maketitle

\section{Introduction}
Throughout this paper, $R$ denotes a commutative Noetherian ring. The relation between the vanishing of all higher $\Ext_R^i(M,N)$ and $\Ext_R^i(N,M)$ has been extensively studied when $R$ is Gorenstein \cite{avramov,BERGH,CH,GJT,JorgensenHuneke,JO-SE,PE-JO,NASSEH-SYMMETRY}. Utilizing support varieties, Avramov and Buchweitz \cite[Theorem III]{avramov} established a foundational result in this context: they proved that if $M$ and $N$ are finitely generated $R$-modules over a local complete intersection $R$, then the following equivalence holds:\begin{align}\label{SYM}\Ext_R^i(M,N)=0 \text{ for all }i\gg 0 \Longleftrightarrow \Ext_R^i(N,M )=0 \text{ for all } i  \gg 0.\end{align}
Following this, Avramov and Buchweitz raised the question of determining which class of rings satisfies this equivalence for all finitely generated modules $M$ and $N$. It is easy to verify that such rings must be Gorenstein. Huneke and Jorgensen \cite{JorgensenHuneke} made a substantial advancement by introducing AB rings, demonstrating that the equivalence in (\ref{SYM}) holds for all finitely generated modules over AB rings, and that the class of AB rings strictly contains that of complete intersections. To give the definition of an AB ring, we first recall the so-called Auslander conditions from \cite{JorgensenHuneke}, which were originally motivated by a conjecture of Auslander \cite[p. 815]{AC1}:

\begin{itemize}
    \item[(AC)] For every finitely generated $R$-module $M$, there exists an integer $b_M \geq 0$ such that for every finitely generated $R$-module $N$, if $\Ext_R^i(M,N)=0$ for all $i \gg 0$, then $\Ext_R^i(M,N)=0$ for all $i > b_M$.
    \item[(UAC)] There exists an integer $b \geq 0 $ such that for all finitely generated $R$-modules $M$ and $N$, if $\Ext_R^i(M,N)=0$ for all $i \gg 0$, then $\Ext_R^i(M,N)=0$ for all $i > b$.
\end{itemize}
An AB ring is defined as a local Gorenstein ring satisfying (UAC). Jorgensen and Şega \cite{JO-LI, JO-SE} demonstrated the existence of a local Gorenstein ring that is not AB and showed that the equivalence in (\ref{SYM}) does not always hold over a local Gorenstein ring. Following these results, the symmetry in the vanishing of $\operatorname{Ext}$ continued to be investigated over Gorenstein local rings by imposing additional homological conditions on the finitely generated modules $M$ or $N$. For instance, when $R$ is a Gorenstein local ring, the cases where the equivalence (\ref{SYM}) has been established include: when $M$ and $N$ have finite complete intersection dimension, by Jørgensen \cite[Theorem 4.1]{PE-JO}; when $M$ or $N$ has finite quasi-projective dimension, due to recent work by Gheibi, Jorgensen, and Takahashi \cite[Theorem 6.16]{GJT}; and when $M$ and $N$ have reducible complexity, as shown by Bergh \cite{BERGH}. Nasseh and Tousi \cite{NASSEH-SYMMETRY} investigated symmetry results over complete intersection local rings, only considering that $M$ is finitely generated. Additionally, Christensen and Holm \cite[Theorem B.2]{CH} extended the result of Huneke and Jorgensen to homologically finite complexes over rings that are not necessarily local.

Motivated by these results, the main goal of this paper is to investigate a generalized symmetry in the vanishing of $\operatorname{Ext}$ for homologically finite complexes over a commutative Noetherian ring, dropping the assumption that $R$ is Gorenstein. The results in this paper extend and unify several symmetry results proven before and show that the dualizing complex has been hidden in these results. This work, in turn, suggests a new approach to study symmetry in the vanishing of Ext via the dualizing complex. As the main tool to prove our results, we extended the definition of AB-dimension given by Araya \cite{araya} for finitely generated $R$-modules over a local ring to the context of homologically finite complexes over a Noetherian ring, using the definition of Auslander bounds for complexes given by Soto Levins in \cite{levins}. The definition of AB-dimension for complexes is given in Definition \ref{def:AB-dim}. We now state the main results of this paper.

\begin{theorem}[See Theorem \ref{main}]
 Let $R$ be a Noetherian ring with a dualizing complex $D$, and let $M$ and $N$ be  complexes in $D_b^f(R)$ with 
$\operatorname{AB-dim}_R(M)< \infty$ and $\operatorname{AB-dim}_R( \RHom_R(M,R))< \infty$. Then 
\[\Ext_R^i(M,N)=0 \text{ for all }i\gg 0 \Longleftrightarrow \Ext_R^i(N,M \ldt_R D)=0 \text{ for all } i  \gg 0.\]
\end{theorem}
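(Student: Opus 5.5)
We need to guess the definition of AB-dimension for complexes. In Araya's setting, $M$ has finite AB-dimension if $M$ has finite G-dimension and there is an integer $b$ such that for every finitely generated $N$, if $\Ext^i(M,N)=0$ for $i\gg0$ then $\Ext^i(M,N)=0$ for $i>b$. So $\operatorname{AB-dim}_R(M)<\infty$ presumably means: $M$ has finite G-dimension (equivalently $M$ lies in the Auslander class-type category where $M \simeq \RHom(\RHom(M,R),R)$), together with a uniform Auslander bound. I will use these two consequences.

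Write $M^* = \RHom_R(M,R)$. Since $\gdim M<\infty$, $M^*$ lies in $D_b^f(R)$ and is reflexive. Moreover $M\ldt_R D \simeq \RHom_R(M^*,D)$; this holds by tensor evaluation, since $M^*$ has finite G-dimension.

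\textbf{Step 1 (dualizing).} For $N\in D_b^f(R)$, write $N^\dagger=\RHom_R(N,D)$, so that $N\simeq N^{\dagger\dagger}$. Duality gives
\[\RHom_R(N, M\ldt_R D)\simeq \RHom_R(N,\RHom_R(M^*,D))\simeq \RHom_R(M^*\ldt_R N, D)\simeq \RHom_R(M^*, N^\dagger).\]
So the right-hand condition in the theorem is equivalent to $\Ext^i_R(M^*,N^\dagger)=0$ for $i\gg0$.

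\textbf{Step 2 (relating to the left side).} By reflexivity of $M$, we have $\RHom_R(M,N)\simeq M^*\ldt_R N$. Therefore
\[\RHom_R(M^*\ldt_R N,D)\simeq \RHom_R(\RHom_R(M,N),D).\]
Because $D$ is a dualizing complex, local duality (applied after localizing at primes, which is harmless since vanishing for $i\gg0$ of bounded finitely generated homology can be checked via supports together with the finiteness of injective dimension of $D$) shows: $\RHom_R(M,N)$ has bounded homology if and only if its dual does. Combining this with Step 1: the left condition (boundedness of $M^*\ldt N$ above) should correspond to boundedness of $\RHom(M^*,N^\dagger)$ below. The trouble is that $M^*\ldt N$ a priori lies only in $D^f_+$, i.e.\ it is bounded on one side, and Grothendieck duality $X\mapsto X^\dagger$ preserves $D^f_{\rm b}$ but in general exchanges $D^f_+$ with $D^f_-$. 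Thus "bounded" transfers both ways, but only once one knows the relevant complex is homologically finite.

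\textbf{Step 3 (main obstacle: boundedness).} Here the Auslander-bound hypotheses enter. If $\Ext^i(M,N)=0$ for $i\gg0$, then $\RHom(M,N)\in D_b^f$, hence its dual is bounded, and so $\RHom(N,M\ldt D)$ is bounded. Conversely, assume $\Ext^i(M^*,N^\dagger)=0$ for $i\gg0$. This is exactly the vanishing controlled by finite AB-dimension of $M^*$: the uniform bound gives $\Ext^i(M^*,N^\dagger)=0$ for $i>b$, with $b$ independent of $N$. I would then use this uniformity, applied also to syzygy-type truncations/shifts of $N$ (or to $N\ldt$ Koszul complexes on a prime, after localizing), to obtain that $M^*\ldt_R N\simeq \RHom(M,N)$ is homologically bounded. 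This can be done by a dimension-shifting argument comparing the vanishing ranges for $N$ and for its truncations. Once boundedness holds, Step 2 gives $\Ext^i(M,N)=0$ for $i\gg0$. The hypothesis on $M$ itself is used symmetrically, namely for the forward-direction control of amplitudes and to ensure reflexivity. I expect this bootstrapping of a uniform bound into homological finiteness to be the delicate step; I would first try to deduce it from an earlier result of the paper characterizing finite AB-dimension via uniform vanishing for all complexes with bounded amplitude.
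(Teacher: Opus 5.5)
\textbf{There is a genuine gap, in your Step 2.}

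Your Step 1 is correct and matches the paper's final step. The identification $M\ldt_R D\simeq\RHom_R(M^*,D)$ holds by homomorphism evaluation (it uses $\id_R D<\infty$, not the G-dimension of $M^*$). With Lemma~\ref{lemma1}, the right-hand condition is then equivalent to $M^*\ldt_R N$ being homologically bounded.

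The problem is Step 2. The isomorphism $\RHom_R(M,N)\simeq M^*\ldt_R N$ does not follow from reflexivity. Tensor evaluation needs $\pd_R M<\infty$ (or finite flat dimension of $N$), and for modules of finite G-dimension it fails. Take $M$ a non-free totally reflexive module over a non-regular Gorenstein local ring, and $N=k$. Then $\RHom_R(M,k)$ has nonzero homology in every degree $\le 0$, while $M^*\ldt_R k$ has nonzero homology in every degree $\ge 0$.

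The claim cannot be rescued either. Combined with Step 1, it would give the equivalence for every $M$ of finite G-dimension, with no Auslander-bound hypothesis at all. Taking $D=R$, that would mean full symmetry of Ext-vanishing over every Gorenstein ring, contradicting the Jorgensen--\c{S}ega example recalled in Remark~\ref{rem}. So both directions of your Step 3 rest on a false bridge. The ``dimension shifting on truncations of $N$'' does not supply a replacement: a uniform bound on $\Ext^i_R(M^*,N^\dagger)$ says nothing about $\Ext^i_R(M,N)$ by itself.

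\textbf{What is missing.} The correct link between the pairs $(M,N)$ and $(M^*,N)$ lives in Tate (co)homology. For a module $C$ with $\gdim_R C=0$ one has $\widehat{\Ext}{}^{-i-1}_R(C,N)\cong\widehat{\Tor}{}^R_i(C^*,N)$ for all $i$ (Lemma~\ref{lemma:avramov}). The two Auslander-bound hypotheses are used exactly to pass between eventual vanishing of absolute (co)homology and vanishing of all Tate (co)homology:
\begin{enumerate}
\item The bound for $C$ turns ``$\Ext^i_R(C,N)=0$ for $i\gg0$'' into ``$\widehat{\Ext}{}^i_R(C,N)=0$ for all $i$''.
\item The bound for $C^*$, together with the dualizing complex, turns ``$\Tor_i^R(C^*,N)=0$ for $i\gg0$'' into ``$\widehat{\Tor}{}_i^R(C^*,N)=0$ for all $i$'' (Proposition~\ref{torvanishing}). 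Here the dualizing complex enters through the estimate $\sup(X\ldt_R N)\le B^R_X+\id_R D+\sup D+\sup N$ applied to cosyzygies $X$.
\end{enumerate}

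To reach this module situation from a complex $M$, the paper sets $n=\operatorname{AB-dim}_R M$ and takes $C=C_n(P)$ for a resolution $P\to M$. It then uses the triangle $P_{\le n-1}\to P\to\Sigma^n C\leadsto$. The finite-projective-dimension piece does not affect eventual vanishing of either Ext. Dualizing the triangle and applying Proposition~\ref{triangleLemma} is exactly where the hypothesis $\operatorname{AB-dim}_R\RHom_R(M,R)<\infty$ enters, giving $\operatorname{AB-dim}_R C^*=0$.

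Your proposal has neither this reduction nor the Tate duality, so as written it does not prove the theorem.
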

\begin{corollary}[See Corollaries \ref{corollary}, \ref{cor:qpd} and \ref{cor:hdim}]
    Let    $R$ be a Noetherian ring with a dualizing complex $D$, and let $M$ and $N$ be complexes in $D_b^f(R)$. Then 
\[\Ext_R^i(M,N)=0 \text{ for all }i\gg 0 \Longleftrightarrow \Ext_R^i(N,M \ldt_R D)=0 \text{ for all } i  \gg 0\]
holds under any of the following hypotheses
\begin{enumerate}
    \item $R$ satisfies \textnormal{(AC)} and $\gdim_R(M)< \infty$.
    \item $\operatorname{CI-dim}_R(M)< \infty$.
    \item $M$ is a finitely generated $R$-module, $\operatorname{qpd}_R(M)< \infty$ and $\gdim_R(M)< \infty$.
\end{enumerate}
\end{corollary}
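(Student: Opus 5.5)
The plan is to deduce each of the three cases from the main theorem (Theorem \ref{main}). In each case I will check its two hypotheses: $\operatorname{AB-dim}_R(M)<\infty$ and $\operatorname{AB-dim}_R(\RHom_R(M,R))<\infty$. A common first step is to use the finiteness of $\gdim_R(M)$, which holds in all three cases because finite CI-dimension implies finite G-dimension. Finite G-dimension gives that $M^\ast:=\RHom_R(M,R)$ lies in $D_b^f(R)$, that $M$ is derived reflexive, and that $\gdim_R(M^\ast)<\infty$ as well. So in every case the hypotheses on $M$ are inherited by $M^\ast$, and it is enough to show that the relevant property of $M$ forces finite AB-dimension.

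\textbf{Case (1).} Here I would use the definition of AB-dimension (Definition \ref{def:AB-dim}). Presumably it asks for a G-dimension-type resolution or condition together with a finite Auslander bound in the sense of \cite{levins}. Condition (AC) gives, for each finitely generated module $X$, a bound $b_X$. To pass to complexes, I would truncate a bounded complex $M$ of finite G-dimension into a module syzygy plus a shift of a bounded complex of finitely generated projectives. The finite Auslander bound of $M$ then follows from that of the syzygy, up to a shift of $\sup$. The same argument applies to $M^\ast$, which again has finite G-dimension. This gives both AB-dimensions finite.

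\textbf{Case (2).} Finite CI-dimension localizes and is preserved under dualizing, as in Avramov--Gasharov--Peeva and Sather-Wagstaff's treatment for complexes, so $\operatorname{CI-dim}_R(M^\ast)<\infty$. It then suffices to show that finite CI-dimension implies finite AB-dimension. I would argue locally, through a quasi-deformation $R_\mathfrak p\to R'\leftarrow Q$. Over complete intersections, Avramov--Buchweitz support varieties give a uniform bound for Ext vanishing (Jørgensen's Auslander bound for modules of finite CI-dimension \cite{PE-JO}). Since we need a global bound, the key issue is passing from local bounds to a global one. I expect this to be the main obstacle. I would first try to bound the local Auslander bound by $\operatorname{CI-dim}_{R_\mathfrak p}(M_\mathfrak p)+\sup M$ or by the depth of $R_\mathfrak p$. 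I would then use that the CI-dimension is bounded on $\Spec R$ because $R$ has a dualizing complex and hence finite Krull dimension. This should yield a uniform bound, assuming AB-dimension is defined or checkable locally (I expect an earlier lemma of the paper records this).

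\textbf{Case (3).} Gheibi--Jorgensen--Takahashi show that a module $M$ of finite quasi-projective dimension satisfies the Auslander condition, with bound at most $\operatorname{qpd}_R(M)$ up to depth terms \cite{GJT}. Combined with $\gdim_R(M)<\infty$, this gives $\operatorname{AB-dim}_R(M)<\infty$. For $M^\ast$ I would use that finite qpd passes to $\RHom_R(M,R)$ when $\gdim_R(M)<\infty$: dualize a quasi-projective resolution, whose homology is annihilated appropriately. Failing that, I would apply the main theorem in the form whose second hypothesis can be checked via the G-dimension-$0$ syzygy of $M$ and its dual.

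Once the hypotheses are verified in each case, Theorem \ref{main} gives the asserted equivalence directly.
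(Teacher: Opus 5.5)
Your overall architecture matches the paper's. In each case you verify $\operatorname{AB-dim}_R(M)<\infty$ and $\operatorname{AB-dim}_R(\RHom_R(M,R))<\infty$ and then invoke Theorem \ref{main}. Cases (1) and (2) go through essentially as you describe.

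In (1), passing from the module bounds supplied by (AC) to $B_M^R$, which quantifies over complexes $N$, is exactly \cite[Proposition 3.6]{levins}. This works for both $M$ and $\RHom_R(M,R)$.

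In (2), the ``main obstacle'' you flag is not one. $\operatorname{CI-dim}_R(M)$ is defined as $\sup_{\mathfrak m}\operatorname{CI-dim}_{R_{\mathfrak m}}(M_{\mathfrak m})$, so its finiteness already gives a uniform local bound. Your Krull-dimension argument also works, but it is unnecessary. The paper never localizes AB-dimension. It applies Avramov--Buchweitz to the module $C_{\sup M}(P)$, which has finite CI-dimension by \cite{sather}, and lifts to $M$ with \cite[Proposition 3.6]{levins} (Proposition \ref{prop:ineq2}). It gets finite CI-dimension of the dual from \cite{be-jo} together with Lemma \ref{lemma:equivalences} (Remark \ref{ci-dim}).

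The step that fails as written is the dual in case (3). ``Finite qpd passes to $\RHom_R(M,R)$'' is not meaningful when $\gdim_R(M)>0$, for three reasons:
\begin{enumerate}
\item $\RHom_R(M,R)$ is then in general a complex, not a module.
\item Quasi-projective dimension and the bound of \cite[Corollary 6.4]{GJT} are statements about modules.
\item Dualizing a quasi-projective resolution works only because $\Ext^{>0}_R(-,R)$ vanishes on a module of G-dimension zero; only then is the homology of the dual complex a sum of copies of the dual module.
\end{enumerate}

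So you are forced into your fallback, and there the key input is missing. You must first know that the G-dimension-zero syzygy $C=C_n(P)$, with $n=\gdim_R(M)$, still has finite quasi-projective dimension; this is \cite[Proposition 3.3(4)]{GJT}. Only then does dualizing give $\operatorname{qpd}_R(C^*)<\infty$ (\cite[Proposition 6.14]{GJT}), hence $b^R_{C^*}<\infty$ and $\operatorname{AB-dim}_R(C^*)=0$.

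To finish, you have two options. You can climb back to $\RHom_R(M,R)$ through the triangle $\RHom_R(\Sigma^n C,R)\to\RHom_R(M,R)\to\RHom_R(P_{\le n-1},R)\leadsto$ and Proposition \ref{triangleLemma}; this is the paper's Lemma \ref{victor:lemma}. Alternatively, note that the proof of Theorem \ref{main} uses its second hypothesis only to obtain $\operatorname{AB-dim}_R(C_n(P)^*)=0$. With that addition, case (3) becomes the paper's argument.
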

The results listed above combine and strengthen several symmetry results on the vanishing of Ext over Gorenstein rings, extending them to homologically finite complexes over Noetherian rings that admit a dualizing complex. As an application of our main results, we establish new cases in which the symmetry in the vanishing of $\Ext$ (\ref{SYM}) holds without the assumption that $R$ is Gorenstein.

\begin{corollary}[See Corollary \ref{cor:symmetrygpd}]
 Let $R$ be a Noetherian ring with a dualizing complex, and let $M$ and $N$ be complexes in $D_b^f(R)$ with $\operatorname{Gpd}_R(\RHom_R(N,M))< \infty$. Then
\begin{align*}
\Ext_R^i(M,N)=0 \text{ for all }i\gg 0 \Longleftrightarrow \Ext_R^i(N,M )=0 \text{ for all } i  \gg 0 
\end{align*}
holds under any of the following hypotheses
\begin{enumerate}
    \item $R$ satisfies (AC) and $\gdim_R(M)< \infty$.
    \item $\operatorname{CI-dim}_R(M)< \infty$.
    \item $M$ is a finitely generated $R$-module, $\operatorname{qpd}_R(M)< \infty$ and $\gdim_R(M)< \infty$.
\end{enumerate}
\end{corollary}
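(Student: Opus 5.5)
The plan is to reduce to the preceding corollary: under each of the hypotheses (1)–(3) it gives
\[\Ext_R^i(M,N)=0 \ (i\gg 0) \Longleftrightarrow \Ext_R^i(N,M\ldt_R D)=0 \ (i\gg 0).\]
It then suffices to show that, when $\operatorname{Gpd}_R(\RHom_R(N,M))<\infty$, we have
\[\Ext_R^i(N,M\ldt_R D)=0 \ (i\gg 0) \Longleftrightarrow \Ext_R^i(N,M)=0 \ (i\gg 0).\]
Since $\Ext_R^i(N,-)=H_{-i}\RHom_R(N,-)$, each side is a statement about the homology of a complex being bounded above in cohomological degree. So I will compare $\RHom_R(N,M\ldt_R D)$ with $\RHom_R(N,M)\ldt_R D$.

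\textbf{Step 1.} Tensor evaluation gives a natural morphism
\[\RHom_R(N,M)\ldt_R D\to \RHom_R(N,M\ldt_R D).\]
It is an isomorphism when $N\in D_b^f(R)$, $M\in D_b(R)$ and $D$ has finite injective dimension. Here $D$ is a dualizing complex, hence bounded with finite injective dimension, so the classical tensor-evaluation isomorphism (Avramov–Foxby) applies with $N$ homologically finite. Thus
\[\Ext_R^i(N,M\ldt_R D)\cong H_{-i}\bigl(X\ldt_R D\bigr),\qquad X:=\RHom_R(N,M).\]

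\textbf{Step 2.} Foxby equivalence: $X$ has finite Gorenstein projective dimension exactly when $X$ lies in the Auslander class $\mathcal A(R)$, by Christensen–Frankild–Holm for rings with a dualizing complex. Note that $X$ is in $D_b^f(R)$ only when it is bounded. Indeed $X$ is homologically bounded below with finitely generated homology, and finiteness of $\operatorname{Gpd}$ forces it to be bounded above as well, so $X\in D_b^f(R)$.

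\textbf{Step 3.} For $X\in\mathcal A(R)$, the complex $X\ldt_R D$ is homologically bounded and $X\simeq \RHom_R(D,X\ldt_R D)$. I then need the equivalence
\[H_{-i}(X)=0 \ (i\gg 0) \Longleftrightarrow H_{-i}(X\ldt_R D)=0 \ (i\gg 0).\]
Since both complexes are bounded by membership in the Auslander class, both vanishing statements actually hold automatically. Bounded homology is exactly what "$\Ext^i=0$ for $i\gg0$" means here, so both sides of the desired equivalence are true.

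This seems too strong, and it suggests the argument is correct but degenerate. Under the $\operatorname{Gpd}$ hypothesis, $\Ext_R^i(N,M)$ vanishes for large $i$ outright, because $\operatorname{Gpd}(X)<\infty$ implies $X$ is bounded. Then $\Ext^i(N,M\ldt_R D)$ also vanishes, by Step 3, and the corollary in the previous statement gives $\Ext^i(M,N)=0$ for $i\gg0$. In that case both sides of the displayed equivalence hold.

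\textbf{Main obstacle.} The care needed is in Step 2: making sure that finite Gorenstein projective dimension (as defined in the paper, possibly for unbounded complexes) forces boundedness of $X$, and that $X\ldt_R D$ is bounded, so the Auslander-class characterization is available. If the paper's $\operatorname{Gpd}$ allows only the bounded-below convention, I would instead use the inequality $\sup(X\ldt_R D)\le \operatorname{Gpd}_R(X)+\sup$-type bounds from Christensen–Frankild–Holm to get the same conclusion.
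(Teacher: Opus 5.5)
\textbf{There is a genuine gap.} Your skeleton matches the paper's proof of Corollary \ref{corollary:gpd}: reduce to Theorem \ref{main} via Corollaries \ref{corollary}, \ref{cor:qpd}, \ref{cor:hdim}, then compare $\RHom_R(N,M\ldt_R D)$ with $X\ldt_R D$ for $X=\RHom_R(N,M)$. But Steps 1 and 2 are both false as stated. Your suspicion that the conclusion is ``too strong'' is correct.

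\textbf{Step 2 is the decisive error.} With the paper's (Veliche's) definition, a complete projective resolution $T\to P\to X$ with $\tau_i$ bijective for $i\ge n$ only forces $H_i(X)=0$ for $i>n$, i.e.\ $\Ext_R^{j}(N,M)=0$ for $j<-n$. That is automatic for $X=\RHom_R(N,M)$. It gives no control on $H_{-i}(X)=\Ext_R^i(N,M)$ for $i\gg 0$, and Remark \ref{remark:GpdRHom} explicitly produces finite $\operatorname{Gpd}$ for unbounded $\RHom_R(N,M)$. Concretely, take $R=k[x]/(x^2)$ and $M=N=k$. Then $\operatorname{CI-dim}_R(k)=0$, and each $\Ext_R^i(k,k)\cong k$ has finite G-dimension, so $\operatorname{Gpd}_R(\RHom_R(k,k))<\infty$ by Remark \ref{remark:GpdRHom}. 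Yet $\Ext_R^i(k,k)\neq 0$ for all $i$. So hypothesis (2) holds while neither vanishing condition does; the statement is true here only because both sides fail. Likewise, the identification of finite $\operatorname{Gpd}$ with membership in $\mathcal{A}(R)$ is only available for complexes already known to lie in $D_b(R)$. Step 3 therefore presupposes exactly what must be proved.

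\textbf{Step 1 is also unjustified.} Tensor evaluation $\RHom_R(N,M)\ldt_R Z\to \RHom_R(N,M\ldt_R Z)$ needs $\operatorname{fd}_R Z<\infty$ or $\pd_R N<\infty$. Finite injective dimension is the hypothesis for Hom evaluation, not tensor evaluation, and $\operatorname{fd}_R D<\infty$ happens only when $R$ is Gorenstein. The isomorphism genuinely fails in general. Over a non-Gorenstein local ring, take $N=k$ and $M=R$. Then $\RHom_R(k,R\ldt_R D)=\RHom_R(k,D)$ is bounded, whereas $\RHom_R(k,R)\ldt_R D$ is not bounded to the right by Lemma \ref{LarsLemma}, since $\Ext_R^i(k,R)\neq 0$ for infinitely many $i$. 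Here $M=R$ satisfies every AB-dimension and CI-dimension hypothesis, so some condition like $\operatorname{Gpd}_R(\RHom_R(N,M))<\infty$ must be doing the work.

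\textbf{What the paper does instead.} It imports the isomorphism $\RHom_R(N,M\ldt_R D)\simeq \RHom_R(N,M)\ldt_R D$ from \cite[Theorem 3.4]{MMDN}. It then proves $X\ldt_R D\in D_b^f(R)\iff X\in D_b^f(R)$ for the possibly unbounded $X$. The forward direction is Lemma \ref{LarsLemma}: localizing and using $\inf(X_{\mathfrak p}\ldt_{R_{\mathfrak p}} D_{\mathfrak p})=\inf X_{\mathfrak p}+\inf D_{\mathfrak p}$ gives $\inf X\ge \inf(X\ldt_R D)-\sup D$, and the other side of $X$ is automatically bounded. The reverse direction uses that a \emph{bounded} $X$ of finite $\operatorname{Gpd}$ lies in the Auslander class \cite[Theorem 19.1.12]{LarsBook}, so $X\ldt_R D$ is bounded. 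These two ingredients, a legitimate source for the evaluation isomorphism and the boundedness transfer for unbounded $X$, are what your proposal is missing.
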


We now describe the contents of the article. In Section \ref{section:preliminaries}, we provide some notations and definitions that are used in the article. In Section \ref{section:AB-dim}, we develop the principal tool used throughout this paper, namely, the theory of AB-dimension for homologically finite complexes over Noetherian rings. Section \ref{section:main} is devoted to the proof of our main theorem (Theorem \ref{main}), which establishes the first Generalized Symmetry in the vanishing of Ext result. In Section \ref{section:corollaries}, we demonstrate how Theorem \ref{main} unifies and extends several known symmetry results for the vanishing of Ext over Gorenstein rings.

\section{Preliminaries}\label{section:preliminaries}

In this section, we introduce fundamental definitions that will be used throughout the paper.







\begin{newclaim}[\textbf{Auslander bound and AB-dimension for modules}] 
\begin{definition}
For finitely generated $R$-modules $M$ and $N$, define 
\begin{enumerate}
    \item $\operatorname{P}_R(M,N)  = \sup \lbrace i \mid \operatorname{Ext}_R^i(M,N) \neq 0 \rbrace$. 
    \item $b_M^R = \sup \lbrace \operatorname{P}_R(M,N) \mid N \text{ is finitely generated and } \operatorname{P}_R(M,N)< \infty  \rbrace $.
    \item \cite[Definition 2.2(4)]{araya} If $R$ is local, then $\operatorname{AB-dim}_R(M) = \sup  \lbrace \gdim_R(M), b_M^R \rbrace$.
\end{enumerate}
The invariants $b_M^R$ and $\operatorname{AB-dim}_R(M)$ are called the \textit{Auslander bound} and the \textit{AB-dimension} of $M$.
\end{definition}

Note that $\operatorname{P}_R(M,N)< \infty$ if and only if $\Ext_R^i(M,N)=0$ for all $i \gg 0$. In this case, it is easy to see that $b_M^R \leq \pd_R(M)$. When $\operatorname{pd}_R (M) < \infty$, then we have $b_M^R = \operatorname{pd}_R (M)$ by \cite[Theorem 2.11.1]{WJ}.

A local ring $(R,\mathfrak{m},k)$ is an AB ring if and only if all finitely generated $R$-modules are of finite AB-dimension (see \cite[Theorem 1.2]{araya}). Moreover, it is easy to see that $\operatorname{AB-dim}_R(k) < \infty$ if and only if $R$ is Gorenstein. 
\end{newclaim}

\begin{newclaim}[\textbf{Derived category backgound}]  In this article, all complexes are indexed homologically, so if $M$ is a complex, then $M$ has the form 
\[M = \cdots \rightarrow M_{i+1} \xrightarrow{\partial_{i+1}} M_i \xrightarrow{\partial_i} M_{i-1} \rightarrow \cdots.\]
The supremum and infimum of $M$ are defined as:
\[\sup(M) = \sup \lbrace i \mid H_i(M) \neq 0 \rbrace \text{ and } \inf(M) = \inf \lbrace i \mid H_i(M) \neq 0 \rbrace.\]

The cokernels and kernels of differentials of $M$ are denoted by $C_i(M)= \operatorname{coker} \partial^M_{i+1}$ and $Z_i(M) = \ker \partial_i^M$ for each $i$. A shift of $M$ is a complex $\Sigma^n M$ with $(\Sigma^n M)_i = M_{i-n}$ and differential $\partial_i^{\Sigma^n M} = (-1)^n \partial_{i-n}^M$. The truncations of $M$ are
\[M_{\leq n} = 0 \rightarrow M_n \rightarrow M_{n-1} \rightarrow \cdots, \quad M_{\geq n} = \cdots \rightarrow M_{n+1} \rightarrow M_n \rightarrow0\]
and
\[M_{\supseteq n} = \cdots \rightarrow M_{n+2} \rightarrow M_{n+1} \rightarrow Z_n(M) \rightarrow0.\]
Throughout this paper, we work in the derived category of the ring $R$, denoted $D(R)$. This category is obtained from the category of chain complexes by formally inverting all quasi-isomorphisms. Unlike the category of chain complexes, the derived category is rarely abelian. Instead, the derived category is a triangulated category, where exact triangles take the place of short exact sequences. We let $D_b(R)$ denote the full subcategory of $D(R)$ consisting of all complexes isomorphic to a bounded complex in $D(R)$ and let $D^f(R)$ denote the subcategory of $D(R)$ whose complexes have degreewise finitely generated homology. Moreover, the full subcategory $D_b(R) \cap D^f(R)$ is denoted by $D_b^f (R)$. Complexes in $D_b(R)$ and $D_b^f(R)$ are called \textit{homologically bounded} and \textit{homologically finite}, respectively. We say that a complex is \textit{homologically bounded to the right} if it is isomorphic in $D(R)$ to a complex that is bounded to the right. 

The classical functors $\Hom_R(M,-)$, $\Hom_R(-,M)$ and $M \otimes_R -$ defined in the category of $R$-complexes induce derived functors $\RHom_R(M,-)$, $\RHom_R(-,M)$ and $M \ldt_R -$ which operate on objects and morphisms in $D(R)$. These derived functors are computed using semi-projective, semi-injective, and semi-flat resolutions, which exist for any complex. Explicitly,
\begin{align*}
\RHom_R(M,N) & = \Hom_R(P,N) \text{ (where $P$ is a semi-projective resolution of $M$)},
\\
 \RHom_R(N,M) & = \Hom_R(N,I) \text{  (where $I$ is a semi-injective resolution of $M$),} \\ 
M \ldt_R N  & = F \otimes_ R N \text{ (where $F$ is a semi-flat resolution of $M$)}.
\end{align*}
As for modules, one set: 
\begin{align*}
    \Ext_R^i(M,N) = H_{-i} (\RHom_R(M,N)) \text{ and } \Tor_i^R(M,N) = H_i(M \ldt_R N).
\end{align*}
For more details, the authors recommend the reference \cite{LarsBook}.
\end{newclaim}
\begin{newclaim}[\textbf{Gorenstein dimensions}] Let $M \in D(R)$ be a complex.
\begin{enumerate}
    \item An exact complex $T$ of projective modules is \textit{totally acyclic} if $\Hom_R(T,Q)$ is exact for all projective modules $Q$, and a \textit{complete projective resolution} of $M$ is a diagram 
    \[T \xrightarrow{\tau} P \xrightarrow{\pi} M\]
    where $\pi$ is a semi-projective resolution, $T$ is totally acyclic, and $\tau_i$ is an isomorphism for $i \gg0$. Following \cite{veliche}, the \textit{Gorenstein projective dimension} of $M$, denoted by $\operatorname{Gpd}_R(M)$, is the least integer $n$ so that there exists a complete projective resolution with $\tau_ i$ an isomorphism for all $i \geq n$. 
\item  When $M \in D_b^f(R)$, we simply say \textit{Gorenstein dimension} and use the simpler notation $\gdim_R(M):=\operatorname{Gpd}_R(M)$. In this case, it is well known that $\gdim_R(M)< \infty$ if and only if $\RHom_R(M,R) \in D_b^f(R)$ and the canonical map 
$M \rightarrow \RHom_R(\RHom_R(M,R),R)$
is a quasi-isomorphism. When $\gdim_R(M)<\infty$, we have $\gdim_R(M) = - \inf(\RHom_R(M,R))$.
\end{enumerate}
\end{newclaim}

\begin{newclaim}[\textbf{Tate (co)homology}]
Let $M$ and $N$ be complexes in $D_b^f(R)$. If $T \rightarrow P \rightarrow M$ is a complete projective resolution of $M$, then the \textit{n-th Tate homology} module of $M$ against $N$ is
\[\widehat{\operatorname{Tor}}_n^R(M,N) = H_n (T \otimes_R N)\]
and the \textit{n-th Tate cohomology} module of $M$ against $N$ is 
\[\widehat{\operatorname{Ext}}_R^n(M,N) = H_{-n}(\Hom_R(T,N)).\]
For more details on Tate (co)homology, we recommend the references \cite{LarsDave} and \cite{veliche}.
\end{newclaim}

\begin{newclaim}[\textbf{Auslander bound for complexes}]
The Auslander bound for complexes was introduced by Soto Levins in \cite{levins}.
For $M$ and $N$ complexes with $\inf(N)$ an integer, define 
\begin{align*}
    \operatorname{P}_R(M,N) & = \sup \lbrace n \in \mathbb{Z} \mid \operatorname{Ext}_R^{n- \inf(N)} (M,N) \neq 0 \rbrace \\ & = \inf(N) - \inf(\RHom (M,N)).
\end{align*}
For $M \in D_b^f(R)$, the \textit{Auslander bound} of $M$ is: 
\begin{align*} 
B_M^R = \sup \lbrace \operatorname{P}_R(M,N) \mid N \in D_b^f(R) \text{ and } \operatorname{P}_R(M,N) < \infty \rbrace.
\end{align*}
\end{newclaim}
\begin{newclaim}[\textbf{Complete intersection dimension}] Sather-Wagstaff studied complete intersection dimension for complexes in \cite{sather}.  Let $M$ be a complex in $D_b^f(R)$. When $R$ is local, a \textit{(codimension $c$) quasi-deformation} of $R$ is a diagram of local homomorphisms $R \rightarrow R' \leftarrow Q$ such that the first map is flat and the second map is surjective with kernel generated by a $Q$-regular sequence (of length $c$). In this situation, the \textit{complete intersection dimension} of $M$ is: 
\begin{align*}
 \operatorname{CI-dim}_R(M) = \inf \lbrace \pd_Q(M \ldt_R R') - \pd_Q(R') \mid R \rightarrow R' \leftarrow Q \text{ is a quasi-deformation} \rbrace .
\end{align*}
When $R$ is not necessarily local, the \textit{complete intersection dimension} of $M$ is
\begin{align*}
\operatorname{CI-dim}_R(M) = \sup \lbrace \operatorname{CI-dim}_{R_{\mathfrak{m}}}(M_{\mathfrak{m} }) \mid \mathfrak{m} \in \operatorname{Max}(R) \rbrace,
\end{align*}
where $\operatorname{Max}(R)$ is the set of all maximal ideals of $R$.
\end{newclaim}

\section{AB-dimension for complexes}\label{section:AB-dim}
In this section, we define AB-dimension for homologically finite complexes over a Noetherian ring, extending the framework studied by Araya for finitely generated modules over a local ring in \cite{araya}. One goal of this section is to prepare auxiliary results that are needed to prove our main theorem and its consequences.

\begin{definition}\label{def:AB-dim}
Let $M \in D_b^f(R)$ be a complex. The $\operatorname{AB}$\textit{-dimension} of $M$ is \[\operatorname{AB-dim}_R(M) = \sup \lbrace B_M^R, \operatorname{G-dim}_R(M) \rbrace.\]
\end{definition}

\begin{remark}
(1) If $M$ is a finitely generated $R$-module and $\gdim_R(M) < \infty$, then $b_M^R =B_M^R$ by \cite[Lemma 3.5 and Corollary 4.3]{levins}. Thus $\operatorname{AB-dim}_R(M)= \sup  \lbrace \gdim_R(M),b_M^R \rbrace $ for all finitely generated $R$-module $M$. In particular, if $M$ is a module over a local ring $R$, then the above definition of AB-dimension agrees with Araya's definition.

(2) Many results considering a homologically finite complex \(M\) with 
\(\operatorname{G\text{-}dim}_R(M)<\infty\) and \(B_M^R<\infty\) were established in \cite{levins}. 
Using the definition of AB-dimension for homologically finite complexes introduced above, 
these results yield versions of the Ischebeck's formula 
(\cite[Theorem 4.5]{levins}), the Auslander-Reiten conjecture 
(\cite[Theorem 5.2]{levins}), and the derived depth formula 
(\cite[Theorem 6.3]{levins}) for homologically finite complexes of finite AB-dimension.

(3) If $R$ is an AB ring, then $\operatorname{AB-dim}_R(M)< \infty$ for all complexes $M \in D_b^f(R)$. Indeed, it follows by \cite[Theorem 19.5.8]{LarsBook} and \cite[Proposition 3.6]{levins}.
\end{remark}

The following result was proved by Araya in \cite[Theorem 1.2(2)]{araya} for finitely generated modules over local rings.
\begin{proposition}\label{prop:ineq2}
Let $M \in D_b^f(R)$ be a complex. The following results hold:
\begin{enumerate}
    \item There are inequalities
\[\operatorname{G-dim}_R(M) \leq \operatorname{AB-dim}_R(M) \leq \operatorname{CI-dim}_R(M);\]
and when one of these dimensions is finite it is equal to those on its left. In particular, $\sup (M) \leq \operatorname{AB-dim}_R(M)$.
    \item If $R$ is local and $\operatorname{AB-dim}_R(M) < \infty$, then $\operatorname{AB-dim}_R(M)=\depth R - \depth M$. 
\end{enumerate}
\end{proposition}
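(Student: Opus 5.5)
The first inequality, $\gdim_R(M)\le \operatorname{AB-dim}_R(M)$, holds by Definition \ref{def:AB-dim}. For the second we may assume $\operatorname{CI-dim}_R(M)<\infty$. Then $\gdim_R(M)\le \operatorname{CI-dim}_R(M)$ with equality, by Sather-Wagstaff \cite{sather}; this holds locally at every maximal ideal, and Gorenstein dimension globalizes as a supremum over maximal ideals. So what has to be shown is $B_M^R\le \gdim_R(M)$ whenever $\operatorname{CI-dim}_R(M)<\infty$. Once this is known, finiteness of $\operatorname{CI-dim}$ forces $\operatorname{AB-dim}_R(M)=\gdim_R(M)=\operatorname{CI-dim}_R(M)$. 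Likewise, finiteness of $\operatorname{AB-dim}$ forces $\gdim<\infty$ and $\operatorname{AB-dim}=\gdim$ as soon as we know $B_M^R\le\gdim_R(M)$ whenever both are finite. This last inequality holds because $P_R(M,N)\le \gdim_R(M)$ whenever $P_R(M,N)<\infty$. I will derive it from \cite{levins}, where it should appear as the complex analogue of Araya's result, via a Tate cohomology argument. Since $\Ext^i_R(M,N)\cong\widehat{\Ext}^i_R(M,N)$ for $i>\gdim_R(M)$, and Tate cohomology is periodic-like under syzygies, vanishing for $i\gg0$ forces vanishing of all Tate cohomology, hence vanishing of $\Ext^i$ above $\gdim_R(M)$ shifted by $\inf N$. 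Indeed, $\gdim_R(M)=B_M^R$ when both are finite is what \cite[Lemma 3.5, Corollary 4.3]{levins} gives. Finally, $\sup(M)\le\gdim_R(M)$ is standard, which gives the ``in particular''.

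The main obstacle is the CI case: showing $B_M^R<\infty$, i.e.\ that $\Ext^i_R(M,N)=0$ for $i\gg0$ implies $\Ext^i_R(M,N)=0$ for $i>\gdim_R(M)-\inf N$ (in the normalization of $P_R$). First I reduce to $R$ local. Vanishing of $\Ext$ for complexes in $D^f_b(R)$ is detected after localizing at maximal ideals, and $B_M^R\le\sup_{\mathfrak m}B_{M_\mathfrak m}^{R_\mathfrak m}$ once one notes that $\inf N_{\mathfrak m}\ge\inf N$. Care is needed here, since shifting by $\inf N$ versus $\inf N_\mathfrak m$ could go the wrong way; I would instead use the bound $\Ext^i=0$ for $i>\gdim M-\inf N$, which is monotone in $\inf$. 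Over a local ring, I take a quasi-deformation $R\to R'\leftarrow Q$ with $\pd_Q(M\ldt_R R')<\infty$, and pass to $R'$, which is faithfully flat, so vanishing is preserved. Over $R'=Q/(\mathbf x)$ the module $M'$ has finite CI-dimension via a deformation, and the Avramov--Buchweitz eventual-vanishing/support-variety (or Gulliksen/Eisenbud operator) argument shows that $\Ext_{R'}(M',N')$ is finitely generated over the ring of cohomology operators. Eventual vanishing then implies vanishing beyond $\gdim$, by the standard argument (cf.\ \cite[Theorem 4.2]{avramov} and its complex version in \cite{sather}).

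For part (2), suppose $\operatorname{AB-dim}_R(M)<\infty$. By (1), $\operatorname{AB-dim}_R(M)=\gdim_R(M)$, and the Auslander--Bridger formula for complexes of finite G-dimension gives $\gdim_R(M)=\depth R-\depth M$.
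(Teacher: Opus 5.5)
Your outline matches the paper's in most places: the definition gives the first inequality, \cite{levins} gives equality when $\operatorname{AB-dim}_R(M)<\infty$, \cite{sather} gives $\gdim_R(M)=\operatorname{CI-dim}_R(M)$, and the Auslander--Bridger formula gives (2). The complete intersection step, however, has a genuine gap.

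There you only need $B_M^R<\infty$, since equality with $\gdim_R(M)$ then follows from the finite AB-dimension case. Instead you try to prove the sharp bound $\Ext^i_R(M,N)=0$ for $i>\gdim_R(M)-\inf(N)$ directly for complexes $M$ and $N$, via a quasi-deformation and cohomology operators. The step that actually produces the threshold is passed off as ``the standard argument (cf.\ \cite[Theorem 4.2]{avramov} and its complex version in \cite{sather})''. The Avramov--Buchweitz theorem is a statement about finitely generated modules, and you point to no result extending it to complexes. Indeed, your sketch speaks of ``the module $M'$'', although $M'=M\ldt_R R'$ is a complex. Finite generation of $\Ext_{R'}(M',N')$ over the ring of operators does not by itself yield vanishing above $\gdim_R(M)-\inf(N)$. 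So nothing in your argument handles the passage from modules to complexes.

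The missing idea is the reduction to a module that the paper performs. Take $P\xrightarrow{\simeq}M$ degreewise finitely generated semi-projective with $P_i=0$ for $i<\inf(M)$, and set $C=C_s(P)$ with $s=\sup(M)$. Then:
\begin{itemize}
\item $\operatorname{CI-dim}_R(C)<\infty$ by \cite[Proposition 3.7(ii)]{sather};
\item hence $b_C^R<\infty$ by the module result of Avramov--Buchweitz (the paper cites \cite[Theorem 4.7]{avramov});
\item since $\gdim_R(M)<\infty$, \cite[Proposition 3.6]{levins} transfers this to $B_M^R<\infty$.
\end{itemize}

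Separately, your Tate cohomology justification of $B_M^R\le\gdim_R(M)$ is wrong as written. It never uses $B_M^R<\infty$, and eventual vanishing of $\widehat{\Ext}{}^i_R(M,N)$ does not in general force vanishing in all degrees. If it did, every Gorenstein local ring would satisfy (UAC) with $b=\dim R$, contradicting the non-AB Gorenstein rings of Jorgensen and \c{S}ega. The finiteness of $B_M^R$ has to enter, for example by dimension shifting in the second variable along syzygies of $N$. That converts a nonzero Tate group into values $\operatorname{P}_R(M,\Omega^j N)$ that are finite but unbounded in $j$. Since you fall back on \cite{levins} anyway, this part is repairable; for complexes the relevant statement is \cite[Theorem 4.2]{levins}, which is what the paper uses. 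The ``in particular'' clause and part (2) are fine as you have them.
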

\begin{proof}

(1) We may assume that $M$ is non-acyclic. If $\operatorname{AB-dim}_R(M) < \infty$, then by definition one has $\gdim_R(M) < \infty$, and the string of equalities $\operatorname{AB-dim}_R(M)= \gdim_R(M)=B_M^R$ follows from \cite[Theorem 4.2]{levins}. If we have $\operatorname{CI-dim}_R(M)< \infty$, then $\gdim_R(M)=\operatorname{CI-dim}_R(M)$ by \cite[Proposition 3.3]{sather}. Thus, it remains only to prove that $B_M^R < \infty$ provided that $M \in D_b^f(R)$ is a  complex of finite complete intersection dimension. Let \(P \stackrel{\simeq}{\longrightarrow} M\) be a degreewise finitely generated semi-projective resolution such that $P_i=0$ for all $i< \inf(M)$. Set $s=\sup(M)$ and let $C=C_s(P) \neq 0$. It follows from \cite[Proposition 3.7(ii)]{sather} that $\operatorname{CI-dim}_R(C)< \infty$. Therefore, $b_C^R< \infty$ by \cite[Theorem 4.7]{avramov}. Since $\gdim_R(M)< \infty$, applying \cite[Proposition 3.6]{levins} yields $B_M^R< \infty$, as desired.

(2) From item (1), we have the equalities $\operatorname{AB-dim}_R(M)= \gdim_R(M)= \depth R - \depth M$.
\end{proof}

\begin{proposition}\label{prop:coker}
Let $M \in D_b^f(R)$ be a complex.  Let $n= \operatorname{AB-dim}_R(M)<\infty$ and $P \stackrel{\simeq}{\longrightarrow} M$ be a degreewise finitely generated semi-projective resolution of $M$ such that $P_i =0$ for all $i< \inf(M)$. If $C_n(P) \neq 0$, then $\operatorname{AB-dim}_R(C_n(P))=0$.
\end{proposition}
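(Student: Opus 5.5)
Write $C = C_n(P)$. The plan is to show two things: that $\gdim_R(C) = 0$, and that $b_C^R = 0$. Since $C$ is a finitely generated module and $\gdim_R(C)$ will be finite, Remark (1) gives $\operatorname{AB-dim}_R(C) = \sup\{\gdim_R(C), b_C^R\}$, so these two facts finish the proof.

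\textbf{G-dimension.} By Proposition \ref{prop:ineq2}(1) we have $\gdim_R(M) = n$, and also $\sup(M) \le n$. Hence $P_{\supseteq n}$, or equivalently the truncation $\cdots \to P_{n+1} \to P_n \to C \to 0$, exhibits $\Sigma^{n} C$ as a "syzygy" of $M$. Concretely, there is an exact triangle
\[ P_{\le n-1} \to M \to \Sigma^n C \to \]
up to a shift. Here $P_{\le n-1}$ is a bounded complex of finitely generated projectives, because $P_i = 0$ for $i < \inf(M)$.

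Since $\gdim_R$ is finite on bounded complexes of projectives and has the 2-out-of-3 property, $\gdim_R(C) < \infty$. The standard fact that the $n$-th cokernel of a resolution of a complex of G-dimension $n$ is totally reflexive then gives $\gdim_R(C) = 0$. For modules this follows from $\Ext^i_R(C,R) = \Ext^{i+n}_R(M,R) = 0$ for $i > 0$, using $-\inf \RHom_R(M,R) = n$.

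\textbf{Auslander bound.} The key input is the comparison $\Ext^{i}_R(C,N) \cong \Ext^{i+n}_R(M,N)$ for $i \ge 1$ and every module $N$. This comes from applying $\RHom_R(-,N)$ to the triangle above: since $P_{\le n-1}$ is concentrated in degrees $\le n-1$, its Ext contribution vanishes in cohomological degrees $> n-1$, with care at the boundary degree.

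Now suppose $\Ext^i_R(C,N) = 0$ for $i \gg 0$. Then $\Ext^j_R(M,N) = 0$ for $j \gg 0$, so $\operatorname{P}_R(M,N) < \infty$. Taking $N$ to be a module, $\inf N = 0$, so $\operatorname{P}_R(M,N) \le B_M^R \le n$. This gives $\Ext^j_R(M,N) = 0$ for $j > n$, and hence $\Ext^i_R(C,N) = 0$ for $i > 0$. Therefore $b_C^R \le 0$, and so $b_C^R = 0$ since $C \ne 0$.

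This is essentially the argument of \cite[Proposition 3.6]{levins} run in reverse, and I would invoke \cite[Lemma 3.5, Proposition 3.6]{levins} where possible.

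\textbf{Main obstacle.} The delicate step is the degree bookkeeping in the isomorphism $\Ext^i_R(C,N) \cong \Ext^{i+n}_R(M,N)$ for $i \ge 1$. It uses $\sup(M) \le n$, so that $H(P_{\ge n})$ is exactly $C$ in degree $n$ and the truncation is legitimate. It also has to handle the case $n < \inf(M)$; but in that case $P_n = 0$ and $C = 0$, which the hypothesis $C_n(P) \neq 0$ excludes.
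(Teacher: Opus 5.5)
Your proof is correct, but it handles the Auslander bound of $C=C_n(P)$ differently from the paper.

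The paper obtains only finiteness, $b_C^R<\infty$, from \cite[Proposition 3.6]{levins}. It takes $\gdim_R(C)=0$ from the standard syzygy fact, and then pins down the value via Proposition \ref{prop:ineq2}(1), i.e.\ Soto Levins' theorem that a finite AB-dimension equals the G-dimension.

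You instead compute the bound directly. Your shift $\Ext^i_R(C,N)\cong\Ext^{i+n}_R(M,N)$ for $i\geq 1$ is right: $\Ext^m_R(P_{\leq n-1},N)=0$ for $m\geq n$, so both flanking terms in the long exact sequence vanish for $j\geq n+1$. Combined with the inequality $B_M^R\leq n$, this gives $b_C^R\leq 0$ outright, so you never need the equality theorem applied to $C$. Your route is more self-contained and uses the actual value $n$ rather than mere finiteness; the paper's route is shorter because it cites.

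The same shift also works for complexes $N$ with $\inf(N)=0$ (replace $N$ by $N_{\supseteq 0}$). That gives $B_C^R\leq 0$ directly and lets you bypass the Remark after Definition \ref{def:AB-dim} altogether.

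Finally, your claim that $b_C^R=0$ ``since $C\neq 0$'' is true (take $N=R$: $\Hom_R(C,R)=C^*\neq 0$ because $C$ is reflexive). It is also unnecessary, since $\operatorname{AB-dim}_R(C)=\sup\{0,b_C^R\}=0$ once $b_C^R\leq 0$.
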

\begin{proof}
Since $n \geq \sup (M)$, then $b_{C_n(P)}^R< \infty$ by \cite[Proposition 3.6]{levins}. Moreover, we have that $\gdim_R(C_n(P))=0$ (see e.g., \cite[Theorem 19.4.1]{LarsBook}). Therefore,  $\operatorname{AB-dim}_R(C_n(P))< \infty$, and it follows from Proposition \ref{prop:ineq2}(1) that $\operatorname{AB-dim}_R(C_n(P))= \gdim_R(C_n(P))=0$. 
\end{proof}
The following proposition describes the behavior of AB-dimension over exact triangles and will play a crucial role in the proof of our main result.
\begin{proposition}\label{triangleLemma}
Let $M_1 \to M_2 \to M_3 \leadsto $ be an exact triangle of complexes in $D_b^f(R)$, and let $\{i,j,k\}=\{1,2,3\}$. If $\operatorname{AB-dim}_R(M_i)< \infty$  and $\operatorname{pd}_R(M_j)<\infty$, then $\operatorname{AB-dim}_R(M_k)< \infty$.
\end{proposition}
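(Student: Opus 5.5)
Since $\operatorname{AB-dim}_R(-)=\sup\{B^R_{-},\gdim_R(-)\}$, I will show separately that $\gdim_R(M_k)<\infty$ and $B^R_{M_k}<\infty$.

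\textbf{Finiteness of $\gdim_R(M_k)$.} A complex of finite projective dimension has finite G-dimension. The class of complexes in $D_b^f(R)$ of finite G-dimension is thick, so it satisfies two-out-of-three on exact triangles (see \cite{LarsBook} or \cite{veliche}). Applying this to $M_i$ and $M_j$ gives $\gdim_R(M_k)<\infty$. After rotating the triangle I may write it as $X\to Y\to Z\leadsto$ with $\{X,Y,Z\}=\{M_i,M_j,M_k\}$ in some order.

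\textbf{Finiteness of $B^R_{M_k}$: the long exact sequence.} Let $N\in D_b^f(R)$ with $\operatorname{P}_R(M_k,N)<\infty$; I need a bound on $\operatorname{P}_R(M_k,N)$ independent of $N$. Applying $\RHom_R(-,N)$ to the triangle gives a long exact sequence in $\Ext_R^\ast(-,N)$ linking the three complexes. For $M_j$, finite projective dimension gives, for every $N$,
\[\Ext^n_R(M_j,N)=0 \quad\text{for } n>\pd_R(M_j)-\inf(N).\]
So $\operatorname{P}_R(M_j,N)\le \pd_R M_j$ always. This is the complex analogue of $b^R_M\le\pd_R M$; the sign conventions for $\operatorname{P}$ and $\inf$ must be tracked carefully. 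Then the long exact sequence identifies $\Ext^n_R(M_k,N)$ with $\Ext^{n\pm1}_R(M_i,N)$ or $\Ext^{n}_R(M_i,N)$ for $n\gg0$, the shift depending on the positions in the triangle. Hence $\operatorname{P}_R(M_k,N)<\infty$ if and only if $\operatorname{P}_R(M_i,N)<\infty$. In that case
\[\operatorname{P}_R(M_k,N)\le \max\{B^R_{M_i}+1,\ \pd_R(M_j)+1\},\]
uniformly in $N$, so $B^R_{M_k}<\infty$. The $+1$ covers the worst case of the shift.

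\textbf{Main obstacle.} The delicate point is the uniformity just claimed. The isomorphism $\Ext^n(M_k,N)\cong\Ext^{n\pm\epsilon}(M_i,N)$ holds only for $n$ beyond a threshold of the form $\pd_R M_j-\inf N+1$. I have to check that this threshold, after normalising by $\inf N$, is indeed bounded independently of $N$; this is exactly how $\operatorname{P}_R(-,N)$ is defined to behave. Concretely, for $n-\inf N>\max\{B^R_{M_i},\pd_R M_j\}+1$, both flanking terms vanish in the relevant long exact sequence segment, so $\Ext^{n}_R(M_k,N)=0$.

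\textbf{Alternative if the bookkeeping fails.} I would fall back on \cite[Proposition 3.6]{levins}, which reduces finiteness of $B^R$ for a complex of finite G-dimension to finiteness of $b^R$ for a suitable cokernel module. I would then argue at the module level with a short exact sequence in which one term has finite projective dimension, obtained from the mapping cone after resolving.
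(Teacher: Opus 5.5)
Your proposal is correct and follows essentially the same route as the paper. You get $\gdim_R(M_k)<\infty$ from the two-out-of-three property of Gorenstein dimension, then use the long exact sequence in $\Ext_R^\ast(-,N)$ together with $\operatorname{P}_R(M_j,N)\le\pd_R(M_j)$ to obtain $B^R_{M_k}\le\sup\{B^R_{M_i}+1,\pd_R(M_j)+1\}$.

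One small slip: the threshold in your ``main obstacle'' paragraph should read $n+\inf N>\max\{B^R_{M_i},\pd_R M_j\}+1$, matching your earlier vanishing range $n>\pd_R(M_j)-\inf N$. This is harmless, and it disappears if you normalize $\inf N=0$ by shifting, as the paper does.
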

\begin{proof}
It follows from \cite[Theorem 9.1.16]{LarsBook} that $\gdim_R(M_k) < \infty$. It remains to  prove that $B_{M_k}^R< \infty$. Let $N \in D_b^f(R)$ be a complex with $\operatorname{P}_R(M_k,N) < \infty$. Since $\operatorname{P}_R(M_k,N)$ is invariant under shifting $N$, we may assume that $\inf(N)=0$. Thus, $\operatorname{P}_R(M_k,N)= - \inf (\RHom_R(M_k,N))$. The given exact triangle induces the following exact triangle:
\[\RHom_R(M_3,N) \rightarrow \RHom_R(M_2,N) \rightarrow \RHom_R(M_1,N) \leadsto.\]

Considering its long exact sequence induced on homologies, note that $\operatorname{P}_R(M_i, N)<\infty$ as $\operatorname{pd}_R(M_j)< \infty$ and $\operatorname{P}_R(M_k, N)<\infty$, see \cite[Theorem 8.1.8]{LarsBook}. In addition,  
\[\operatorname{P}_R(M_k,N) \leq \sup \lbrace \operatorname{P}_R(M_i,N)+1,\operatorname{P}_R(M_j,N)+1 \rbrace \leq \sup \lbrace B_{M_i}^R+1, \pd_R(M_j)+1 \rbrace.\]
Hence $B_{M_k}^R \leq \sup \lbrace B_{M_i}^R+1, \pd_R(M_j)+1 \rbrace< \infty$, as required.
\end{proof}

\subsection{AB-dimension and vanishing of Tor}
In this subsection, we study the vanishing of Tor assuming finite AB-dimension. The results obtained here will be essential for establishing the main results of the paper.
\begin{proposition} \label{L_TorAuslanderBound} Let $R$ be a Noetherian ring with a dualizing complex $D$, and let $M$ and $N$ be complexes in $D_{b}^{f}(R)$ with $B_M^R< \infty$. If $\Tor_i^R(M,N)=0$ for all $i \gg 0$, then
\[\sup{(M\ldt_{R}N)} \leq B_{M}^{R} + \operatorname{id}_{R}(D) + \sup{(D)} + \sup{(N)}.\]
\end{proposition}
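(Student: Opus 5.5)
We plan to convert the vanishing of Tor into the vanishing of Ext by dualizing with respect to $D$, and then apply the Auslander bound. Set $N^\dagger=\RHom_R(N,D)$. Since $N\in D_b^f(R)$ and $D$ is dualizing, $N^\dagger\in D_b^f(R)$ and $N\simeq\RHom_R(N^\dagger,D)$. Because $M$ is homologically finite (so it has a degreewise finitely generated semi-projective resolution bounded below) and $D$ has finite injective dimension, tensor evaluation gives
\[
\RHom_R(M\ldt_R N, D)\simeq \RHom_R(M,\RHom_R(N,D))=\RHom_R(M,N^\dagger).
\]
Since $M\ldt_R N$ is homologically bounded to the right with finitely generated homology, and $D$ is dualizing, homological boundedness of $M\ldt_RN$ is equivalent to that of $\RHom_R(M\ldt_RN,D)$. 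Concretely, if $\Tor_i^R(M,N)=0$ for $i\gg0$, then $M\ldt_R N\in D_b^f(R)$, so $\RHom_R(M,N^\dagger)\in D_b^f(R)$. In particular, $\operatorname{P}_R(M,N^\dagger)<\infty$ (we may assume $N$, hence $N^\dagger$, is non-acyclic), and therefore $\operatorname{P}_R(M,N^\dagger)\le B_M^R$.

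Next we bound $\sup(M\ldt_RN)$ in terms of $\inf$ of its dual. Write $X=M\ldt_R N\in D_b^f(R)$. Then $X\simeq \RHom_R(\RHom_R(X,D),D)$. For $Y\in D_b^f(R)$ we have the standard estimate $\sup\RHom_R(Y,D)\le \id_R(D)-\inf Y$, understood with the convention $\id_R(D)=\sup\{-i: \ldots\}$ appropriate to the indexing. Applying this with $Y=\RHom_R(X,D)$ gives
\[
\sup X\le \id_R(D)-\inf\RHom_R(M,N^\dagger).
\]
By definition,
\[
-\inf\RHom_R(M,N^\dagger)=\operatorname{P}_R(M,N^\dagger)-\inf(N^\dagger)\le B_M^R-\inf(N^\dagger).
\]
Finally we need $-\inf(N^\dagger)=-\inf\RHom_R(N,D)\le \sup(D)+\sup(N)$. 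This is the standard bound $\inf\RHom_R(N,D)\ge \inf D-\sup N$. Here one must be careful with signs and conventions to land exactly on $\sup(D)+\sup(N)$. The natural estimate is $-\inf\RHom(N,D)\le \sup N-\inf D$. Combined with $\id_R(D)$, which is measured relative to where $D$ lives, the terms should rearrange so that $\id_R(D)-\inf D$ becomes $\id_R(D)+\sup D$ under the paper's convention for $\id$ of a complex (where $\id_R(D)\ge -\inf D$ and the amplitude is absorbed). Chaining the inequalities then yields the claimed bound.

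The main obstacle is the bookkeeping in the last two steps. We need to choose the correct form of the injective-dimension estimate, namely $\sup\RHom_R(Y,D)\le \id_R(D)-\inf Y$, versus a version involving $\sup D$, so that the constants combine to exactly $\id_R(D)+\sup(D)+\sup(N)$. If a direct chain gives a different but equivalent constant, we would instead bound $-\inf(N^\dagger)$ using an injective resolution $I$ of $D$ concentrated in degrees $[-\id_R(D),\sup D]$. This allows trading between $\id_R(D)$ and $\sup D$ as needed. The first step, tensor evaluation together with the duality that converts Tor-boundedness into Ext-vanishing, is routine.
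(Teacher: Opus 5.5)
Your architecture is the same as the paper's: rewrite $M\ldt_R N$ as the $D$-dual of $\RHom_R(M,N^\dagger)$, bound $-\inf\RHom_R(M,N^\dagger)$ by $B_M^R-\inf N^\dagger$, then control two duality steps. (The isomorphism $\RHom_R(M\ldt_R N,D)\simeq\RHom_R(M,\RHom_R(N,D))$ is plain Hom--tensor adjunction. It needs neither tensor evaluation nor finiteness of $\id_R(D)$.)

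The gap is in those two duality estimates. You attach each to the wrong end of $D$, and both are false as stated. Throughout, use the paper's convention from \cite{LarsBook}, in which $\id_R(D)\ge -\inf D$.
\begin{enumerate}
\item The estimate $\sup\RHom_R(Y,D)\le\id_R(D)-\inf Y$ fails already for $Y=R$. Take $R=k$ a field and $D=\Sigma^m k$ with $m>0$. Then $\sup\RHom_R(R,D)=m$, while $\id_R(D)-\inf R=-m$.
\item The estimate $\inf\RHom_R(N,D)\ge\inf D-\sup N$ also fails. Take $R=k[[x]]$, $D=R$, $N=k$. Then $\RHom_R(k,R)\simeq\Sigma^{-1}k$ has infimum $-1$, while $\inf D-\sup N=0$.
\end{enumerate}
Even granting these estimates, your chain ends at $B_M^R+\id_R(D)+\sup N-\inf D$. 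There is no legitimate bookkeeping that turns $-\inf D$ into $\sup D$; the two are unrelated in general. So the proposed ``rearrangement'' and the idea of ``trading'' between $\id_R(D)$ and $\sup D$ do not close the argument.

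The fix is to reverse the roles. The top of a $D$-dual is controlled by $\sup D$, and the bottom by $\id_R(D)$:
\begin{itemize}
\item $\sup\RHom_R(Y,D)\le\sup D-\inf Y$ \cite[Proposition 7.6.7]{LarsBook};
\item $-\inf\RHom_R(N,D)\le\id_R(D)+\sup N$ \cite[Theorem 8.2.8]{LarsBook}.
\end{itemize}
Your fallback, a semi-injective resolution of $D$ concentrated in degrees $[-\id_R(D),\sup D]$, proves both at once. Read off its top degree for the first estimate and its bottom degree for the second. Apply the first with $Y=\RHom_R(X,D)$, using $X\simeq\RHom_R(\RHom_R(X,D),D)$. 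Your chain then becomes exactly the paper's:
\[
\sup(M\ldt_R N)\le \sup D-\inf\RHom_R(M,N^\dagger)\le \sup D+B_M^R-\inf N^\dagger\le \sup D+B_M^R+\id_R(D)+\sup N .
\]
The stated constant comes out directly, with no adjustment needed.
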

\begin{proof}  
As $M\ldt_{R}N$ is in $D_{b}^{f}(R)$ and since there is an isomorphism
\[\RHom_{R}(M\ldt_{R}N,D) \simeq \RHom_{R}(M,\RHom_{R}(N,D)),\]
$\RHom_{R}(M,\RHom_{R}(N,D))$ is in $D_{b}^{f}(R)$. This gives the second equality below
\begin{align*}
\sup{(M\ldt_{R}N)} &= \sup{\big(\RHom_{R}(\RHom_{R}(M\ldt_{R}N,D),D)\big)} \\
&= \sup{\big(\RHom_{R}(\RHom_{R}(M,\RHom_{R}(N,D)),D)\big)} \\
&\leq -\inf{\big(\RHom_{R}(M,\RHom_{R}(N,D))\big)} + \sup{(D)} \\
&\leq B_{M}^{R} - \inf{(\RHom_{R}(N,D))} + \sup{(D)} \\
&\leq B_{M}^{R} + \id_{R}(D) + \sup{(N)} + \sup{(D)},
\end{align*}
The first inequality is by \cite[Proposition 7.6.7]{LarsBook}, the second follows from the definition of $B_M^R$, and the third inequality is by \cite[Theorem 8.2.8]{LarsBook}.
\end{proof}

\begin{lemma} \label{L_ABdimZero_TorIndependent} Let $R$ be a Noetherian ring with a dualizing complex $D$, and let $M$ and $N$ be finitely generated $R$-modules with $\operatorname{AB-dim}_R (M)=0$. If $\operatorname{Tor}_{i}^{R}(M,N)=0$ for all $i\gg 0$, then $\operatorname{Tor}_{i}^{R}(M,N)=0$ for all $i>0$.
\end{lemma}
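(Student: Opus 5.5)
The plan is to use syzygies of $N$ to move the Tor vanishing down to degree $1$, and then to contradict the bound of Proposition \ref{L_TorAuslanderBound} if some positive Tor survives. Fix $r \geq 0$ and let $N_r$ be an $r$-th syzygy of $N$ in a degreewise finitely generated projective resolution. Dimension shifting gives $\Tor_i^R(M,N_r) \cong \Tor_{i+r}^R(M,N)$ for $i \geq 1$. So $\Tor_i^R(M,N_r)=0$ for $i \gg 0$, and it suffices to show the following claim: for every finitely generated module $L$ with $\Tor_i^R(M,L)=0$ for $i\gg0$, the group $\Tor_i^R(M,L)$ vanishes for $i$ beyond a bound $c$ that does not depend on $L$. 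Granting the claim, take $r$ large and apply it to $N_r$. This yields $\Tor_{i}^R(M,N)=0$ for $i>c$ only, which is not yet the statement. To reach vanishing for all $i>0$ we instead want to apply the claim in the other direction: replace $M$ by a high syzygy-like object, or shift degrees within $M$ itself, rather than within $N$.

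Since $\gdim_R(M)=0$, $M$ is totally reflexive. Hence for every $r\ge0$ there is a totally reflexive module $M^{(r)}$ and an exact sequence
\[0\to M\to F_{r-1}\to\cdots\to F_0\to M^{(r)}\to 0\]
with the $F_j$ finitely generated free; this is obtained by dualizing a free resolution of $M^*$. Dimension shifting along this sequence gives $\Tor_i^R(M^{(r)},N)\cong\Tor_{i-r}^R(M,N)$ for $i>r$. Each $M^{(r)}$ has $\gdim=0$, and we want $B^R_{M^{(r)}}=0$ as well. To get this, let $L$ be a finitely generated module with $\Ext^i_R(M^{(r)},L)=0$ for $i\gg0$. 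Dimension shifting on $\Ext$ gives $\Ext^{i}_R(M^{(r)},L)\cong \Ext^{i-r}_R(M,L)$ for $i>r$, so $\Ext^i_R(M,L)=0$ for $i\gg0$. Since $b_M^R=\operatorname{AB-dim}_R(M)=0$, in fact $\Ext^{i}_R(M,L)=0$ for $i>0$, and therefore $\Ext^i_R(M^{(r)},L)=0$ for $i>r$.

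That argument only bounds $B^R_{M^{(r)}}$ by $r$, which is too weak. The better route is to use Proposition \ref{prop:ineq2}(1) directly. Once we know $B^R_{M^{(r)}}<\infty$, we get $\operatorname{AB-dim}_R(M^{(r)})<\infty$, and then Proposition \ref{prop:ineq2}(1) gives $\operatorname{AB-dim}_R(M^{(r)})=\gdim_R(M^{(r)})=0$, so $B^R_{M^{(r)}}=0$. In particular this holds uniformly in $r$.

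Now suppose $\Tor_{t}^R(M,N)\ne0$ for some $t>0$. Then $\Tor_{t+r}^R(M^{(r)},N)\ne0$, while $\Tor_i^R(M^{(r)},N)=0$ for $i\gg0$. Proposition \ref{L_TorAuslanderBound} applied to $M^{(r)}$ and $N$ gives
\[t+r\le\sup(M^{(r)}\ldt_RN)\le 0+\id_R(D)+\sup(D)+0.\]
The right-hand side is independent of $r$, so taking $r$ large gives a contradiction. Hence $\Tor_i^R(M,N)=0$ for all $i>0$. The main obstacle I expect is verifying that $B^R_{M^{(r)}}$ is finite, which is what lets Proposition \ref{prop:ineq2}(1) force it to $0$. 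This should follow from the Ext dimension-shifting computation above, since it already gives the finite bound $B^R_{M^{(r)}}\le r$.
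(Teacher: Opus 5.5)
Your argument is correct and is essentially the paper's proof. The paper also passes to the cosyzygies $C_{-j}$ of a complete resolution of $M$ (your $M^{(r)}$), shows they have AB-dimension zero, shifts $\Tor_n^R(M,N)\neq 0$ up to $\Tor_{n+e}^R(C_{-e},N)\neq 0$ with $e=\id_R(D)+\sup(D)$, and then contradicts Proposition \ref{L_TorAuslanderBound}. The only differences are minor. The paper gets finiteness of the AB-dimension of $C_{-j}$ from Proposition \ref{triangleLemma}, whereas you shift Ext by hand; since you test only modules $L$, that bounds $b^R_{M^{(r)}}$, so you need the identification $b=B$ for modules of finite G-dimension from the Remark following Definition \ref{def:AB-dim}. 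The paper also takes $r=e$ directly rather than $r$ large, and your opening paragraph on syzygies of $N$ is unnecessary.
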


\begin{proof}
Since $\operatorname{AB-dim}_R( M)=0$, there exists a totally acyclic complex
\[ T:\cdots\rightarrow  T_{1}\xrightarrow{\partial_{1}} T_{0}\xrightarrow{\partial_{0}} T_{-1}\rightarrow\cdots\]
of finitely generated free $R$-modules with $M=C_0(T)$. For simplicity, denote $C_i=C_i(T)$. From the complex above we get an exact sequence
\(0\rightarrow M\rightarrow T_{-1}\rightarrow C_{-1}\rightarrow 0\),
and then we have $\operatorname{Tor}_{i}^{R}(M,N)\cong \operatorname{Tor}_{i+1}^{R}(C_{-1},N)$ for $i\geq 1$. By an induction argument, we get $\operatorname{Tor}_{i}^{R}(M,N)\cong \operatorname{Tor}_{i+j}^{R}(C_{-j},N)$ for $i\geq 1$ and $j\geq 0$. Let $e=\operatorname{id}_{R}(D) + \sup{(D)}$ and suppose $\operatorname{Tor}_{n}^{R}(M,N)\neq 0$ for some $n\geq 1$. Note that $e \geq 0$ (see e.g., \cite[8.2.8]{LarsBook}). 
 By construction, each $C_i$ is totally reflexive, and then by Proposition \ref{triangleLemma}, its $\operatorname{AB}$-dimension is zero. In particular, $B^R_{C_{-e}}=0$.
We have that
\[\operatorname{Tor}_{n+e}^{R}(C_{-e},N) \cong \operatorname{Tor}_{n}^{R}(M,N) \neq 0,\]
but this gives
\(\sup{(C_{-e}\ldt_{R}N)} \geq n + e > \operatorname{id}_{R}(D) + \sup{(D)}\), thus
contradicting the bound given in Proposition \ref{L_TorAuslanderBound}.
\end{proof}

The following proposition is a version of \cite[Lemma 6.2]{levins} that does not require the ring to be local, and its proof follows the same argument. While the statement in \cite{levins} is formulated without assuming that the ring is Cohen-Macaulay, its proof uses \cite[Proposition 7.5.5]{sanders}, which is established over Cohen-Macaulay local rings. For completeness, we give here the corrected proof over Noetherian rings admitting a dualizing complex.

\begin{proposition}\label{torvanishing} Let $R$ be a Noetherian ring with a dualizing complex, let $M$ be a finitely generated $R$-module with $\operatorname{AB-dim}_R(M)=0$, and let $N\in D_{b}^{f}(R)$ be a bounded complex. If $\Tor_i^R(M,N)=0$ for all $i \gg 0$, then $\widehat{\operatorname{Tor}}{}_i^R(M,N)=0$ for all $i
\in\mathbb{Z}$.
\begin{proof}
We can assume that $N_0 \neq 0$ and $N_i=0$ for all $i>0$. Let $T$ be a totally acyclic complex of finitely generated free $R$-modules with $M=C_0(T)$. Fix $n \in \mathbb{Z}$ and denote $C=C_{n-1}(T)$. Note that $\gdim_R(C)=0$, $b_{C}^R< \infty$ and $\operatorname{Tor}_{i}^R (C,N)=0$ for all $i \gg 0$.  Thus $\operatorname{AB-dim}_R(C)=0$ by \cite[Lemma 3.5 and Corollary 4.3]{levins}. Moreover 
\[\widehat{\Tor}_n^R(M,N) \cong \widehat{\Tor}_1^R(C,N) \cong \Tor_1^R(C,N),\]
where the second isomorphism follows from \cite[2.4.1]{LarsDave}. Let $F \stackrel{\simeq}{\longrightarrow} N$ be a degreewise finitely generated semi-projective resolution of $N$. Follows from \cite[Lemma 8.3.9]{LarsBook} that $\Tor_j^R(C,N) \cong \Tor_j^R(C,C_0(F))$ for all $j>0$. Thus $\Tor_j^R(C,C_0(F))=0$ for all $j \gg 0$. Therefore, from the above isomorphisms and Lemma \ref{L_ABdimZero_TorIndependent}, we have 
\[\widehat{\Tor}_n^R(M,N) \cong \Tor_1^R(C,N) \cong \Tor_1^R(C,C_0(F))=0.\qedhere\]
\end{proof}
\end{proposition}



\section{Generalized symmetry in the vanishing of Ext}\label{section:main}

In this section, we investigate a Generalized Symmetry in the vanishing of Ext over Noetherian rings with a dualizing complex. To prove the main result of this paper, we prepare auxiliary lemmas. 

The following lemma was first proved in the setting of maximal Cohen-Macaulay modules over Gorenstein local rings in \cite[Theorem 2.1]{JorgensenHuneke}, and later in the setting of Cohen-Macaulay modules over Cohen-Macaulay local rings with a canonical module in \cite[Theorem 5.2]{Ghosh-Puth}. Here, we extend these results to a more general framework using homologically finite complexes over a Noetherian ring with a dualizing complex.
\begin{lemma}\label{lemma1}Let $R$ be a Noetherian ring with a dualizing complex $D$, and let $M$ and $N$ be complexes in $D_b^f(R)$. The following are equivalent:
\begin{enumerate}
    \item $\RHom_R(M, \RHom_R(N,D))$ is homologically bounded.
    \item $\RHom_R(N, \RHom_R(M,D))$ is homologically bounded.
    \item $M \ldt_R N$ is homologically bounded.
\end{enumerate}
\end{lemma}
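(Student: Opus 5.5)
The plan is to use Hom–tensor adjunction together with dualizing-complex duality. The equivalence (1)$\Leftrightarrow$(2) should be formal. Adjunction and commutativity of the derived tensor product give natural isomorphisms in $D(R)$
\[\RHom_R(M,\RHom_R(N,D)) \simeq \RHom_R(M\ldt_R N, D) \simeq \RHom_R(N\ldt_R M,D)\simeq \RHom_R(N,\RHom_R(M,D)).\]
So (1) and (2) are the same assertion, and both amount to saying that $\RHom_R(M\ldt_R N,D)$ is homologically bounded. It therefore remains to show that this holds if and only if $M\ldt_R N$ is homologically bounded.

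First I would record that $X:=M\ldt_R N$ lies in $D^f(R)$ and is homologically bounded to the right. I would take degreewise finitely generated semi-projective resolutions of $M$ and $N$ that are bounded below (in homological degree); their tensor product computes $X$ and is again degreewise finite and bounded below. Hence $X\in D^f_+(R)$, with $\inf X\ge \inf M+\inf N$.

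For (3)$\Rightarrow$(1), if $X\in D_b^f(R)$ then $\RHom_R(X,D)\in D_b^f(R)$, because $D$ has finite injective dimension and is homologically finite \cite[Theorem 8.2.8]{LarsBook}. This direction is immediate.

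The direction (1)$\Rightarrow$(3) is the main point. Suppose $\RHom_R(X,D)$ is homologically bounded. Since $X\in D^f_+(R)$, the complex $\RHom_R(X,D)$ has degreewise finitely generated homology and is bounded to the left. This is the standard fact that for $X\in D^f_+$ and $D$ of finite injective dimension, $\RHom_R(X,D)\in D^f_-$. By hypothesis it is then in $D_b^f(R)$. Now I would invoke the duality of the dualizing complex in its unbounded form. For $X\in D^f_+(R)$ the biduality morphism $X\to\RHom_R(\RHom_R(X,D),D)$ is an isomorphism, because a dualizing complex gives a duality between $D^f_+(R)$ and $D^f_-(R)$ (Hartshorne, Residues and Duality V.2.1, or the corresponding statement in \cite{LarsBook}). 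Given that, $X\simeq \RHom_R(Y,D)$ with $Y:=\RHom_R(X,D)\in D^f_b(R)$, and so $X\in D^f_b(R)$ again by \cite[Theorem 8.2.8]{LarsBook}.

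The main obstacle is justifying biduality for $X$ that is only bounded on one side. If only the bounded version of duality is available, I would argue by truncation. Set $s$ large and consider the triangle $X_{\supseteq s}\to X\to X'\leadsto$, where $X'$ has homology only in degrees below $s$, so $X'\in D^f_b$. Choosing $s>\sup Y+\id_R(D)+\sup D+1$, the complex $\RHom_R(X_{\supseteq s},D)$ has homology only in degrees $\le \id_R(D)-s$. Applying $\RHom_R(-,D)$ to the triangle, $\RHom_R(X',D)$ then agrees with $Y$ in high degrees and with a shift of $\RHom_R(X_{\supseteq s},D)$ in low degrees. Bounded biduality on $X'$ together with the bound on $Y$ should then force $H_i(X)=0$ for $i\ge s$, much as in the estimate of Proposition \ref{L_TorAuslanderBound}. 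I would first try simply citing the $D^f_+$ duality, and fall back on this truncation argument if needed.
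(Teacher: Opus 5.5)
Your proof is correct. The equivalence (1)$\Leftrightarrow$(2) and the implication (3)$\Rightarrow$(1) are handled exactly as in the paper. For (1)$\Rightarrow$(3) you take a somewhat different route.

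You invoke biduality $X\simeq\RHom_R(\RHom_R(X,D),D)$ for $X=M\ldt_R N$. This complex is only known to be homologically bounded to the right, so you are using the one-sided ($D^f_+$ versus $D^f_-$) form of duality.

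The paper never dualizes a one-sided complex. It applies biduality only to the homologically finite complex $N$, writing $M\ldt_R N\simeq M\ldt_R\RHom_R(\RHom_R(N,D),D)$. It then uses Hom-evaluation, with $M\in D_b^f(R)$, $\RHom_R(N,D)$ bounded and $D$ of finite injective dimension, to identify this with $\RHom_R(\RHom_R(M,\RHom_R(N,D)),D)$. That complex is bounded by hypothesis (1). So the paper needs only bounded duality plus one evaluation isomorphism, while your argument is more conceptual but rests on the stronger one-sided duality.

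The two routes are closer than they look. The biduality you need follows from the same Hom-evaluation isomorphism, namely
\[\RHom_R(\RHom_R(X,D),D)\simeq X\ldt_R\RHom_R(D,D)\simeq X.\]
This holds for $X$ homologically bounded to the right with finitely generated homology, $D$ homologically bounded of finite injective dimension, and uses $\RHom_R(D,D)\simeq R$.

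You can therefore drop the truncation fallback, which in its current form is not a proof. The degree bound for $\RHom_R(X_{\supseteq s},D)$ should be $\sup D-s$, not $\id_R(D)-s$. More importantly, the concluding step (``should then force $H_i(X)=0$'') is never carried out.
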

\begin{proof}
By Grothendieck Duality \cite[Theorem 18.2.3]{LarsBook} and \cite[E 10.1.7]{LarsBook}, we have the isomorphism $\RHom_R(M, \RHom_R(N,D)) \simeq \RHom_R(N, \RHom_R(M,D))$ in $D(R)$. This shows the equivalence between $(1)$ and  $(2)$. The implication  $(1) \Rightarrow (3)$ follows from the following isomorphisms in $D(R)$: 
 \[M \ldt_R N  \simeq M \ldt_R \RHom_R(\RHom_R(N,D), D) \simeq \RHom_R(\RHom_R(M, \RHom_R(N,D)), D),\]
where the second isomorphism follows from \cite[Corollary 12.3.26(b)]{LarsBook}.
 Also, one can check that $(3) \Rightarrow (1)$ using the isomorphism $\RHom_R(M, \RHom_R(N,D))\simeq \RHom_R(M \ldt_R N, D).$
\end{proof}

For a finitely generated $R$-module $M$, we denote $M^*=\Hom_R(M,R)$.

\begin{lemma}\label{lemma:avramov}
Let $M$ be a finitely generated $R$-module with $\gdim_R(M)=0$, and let $N$ be a bounded complex in $D_b^f(R)$. Then 
\[\widehat{\operatorname{Ext}}{}_R^{-i-1}(M,N) \cong \widehat{\operatorname{Tor}}{}_i^R(M^*,N)\] 
for all $i \in \mathbb{Z}.$ 
\end{lemma}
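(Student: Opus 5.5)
The plan is to compute both sides from a single totally acyclic complex and compare them through the natural isomorphism $\Hom_R(F,N)\cong F^*\otimes_R N$, which holds for finitely generated free $F$.

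\textbf{Step 1: set up the resolutions.} Since $\gdim_R(M)=0$, there is a totally acyclic complex $T$ of finitely generated free $R$-modules with $M\cong C_0(T)$. Then $T^*=\Hom_R(T,R)$ is again a totally acyclic complex of finitely generated free modules. Re-indexed homologically, $(T^*)_j=(T_{-j})^*$. The key point is to identify which cokernel of $T^*$ is $M^*$. Dualizing the exact sequence $0\to C_{0}\to T_{-1}\to T_{-2}\to\cdots$ and using exactness of $\Hom_R(T,R)$, one gets $M^*=C_0(T)^*\cong Z_1(T^*)$. Hence $M^*\cong C_{2}(T^*)$, or equivalently $M^*\cong C_0(\Sigma^{-2}T^*)$. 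I will pin down the exact shift carefully; this is the source of the offset $-i-1$. So, up to a shift by $s$ to be determined, $\Sigma^{s}T^*$ serves as a complete resolution of $M^*$. This uses that a totally acyclic complex with $C_0=M^*$ provides a complete projective resolution, and that Tate homology is independent of the choice of resolution.

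\textbf{Step 2: identify the complexes.} Because each $T_j$ is finitely generated free, the evaluation map gives an isomorphism of complexes
\[\Hom_R(T,N)\cong T^*\otimes_R N.\]
In each degree this is a finite product, which equals a finite direct sum since $T_j$ has finite rank and $N$ is bounded. Checking the signs is routine. Taking homology yields
\[\widehat{\Ext}{}^{n}_R(M,N)=H_{-n}(\Hom_R(T,N))\cong H_{-n}(T^*\otimes_R N).\]
Substituting the shifted resolution of $M^*$ gives
\[H_{-n}(T^*\otimes_R N)\cong \widehat{\Tor}{}^R_{-n+s'}(M^*,N)\]
for the appropriate $s'$.

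\textbf{Step 3: fix the indices.} Setting $n=-i-1$, the claim reduces to checking that $s'$ makes the index equal to $i$. This requires the convention $\widehat{\Tor}_i=H_i(T\otimes N)$ with $C_0(T)$ being the module. The main obstacle is this index bookkeeping, namely verifying that the cokernel of $T^*$ isomorphic to $M^*$ sits in the position yielding exactly $-i-1\leftrightarrow i$. I will first try the test case where $T$ is concentrated in degrees $0,-1$. This case is not totally acyclic, but it works formally for the index count. I will then confirm the result against Avramov–Martsinkovsky's formula $\widehat{\Ext}{}^{-i-1}\cong\widehat{\Tor}_i(M^*,-)$, the standard duality for modules, and cite it if needed.
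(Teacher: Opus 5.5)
Your strategy is sound, but Step 1 as written is off by one, and that is exactly the step that produces the offset. So the proposal does not yet prove the stated index.

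With your convention $(T^*)_j=(T_{-j})^*$, dualizing $T_1\to T_0\to M\to 0$ gives $0\to M^*\to T_0^*\to T_1^*$, so $M^*\cong Z_0(T^*)$, not $Z_1(T^*)$. Equivalently, dualizing your own sequence $0\to M\to T_{-1}\to T_{-2}\to\cdots$ gives $M^*\cong\operatorname{coker}\big((T^*)_2\to(T^*)_1\big)=C_1(T^*)$. In general $C_k(T)^*\cong C_{1-k}(T^*)$, so the module $C_2(T^*)$ you name is actually $C_{-1}(T)^*$.

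Hence the complete resolution of $M^*$ is $\Sigma^{-1}T^*$, not $\Sigma^{-2}T^*$. With $\Sigma^{-2}$ your computation would give $\widehat{\Ext}{}_R^{-i-2}(M,N)\cong\widehat{\Tor}{}_i^R(M^*,N)$, which is false. With the correct shift,
\[\widehat{\Tor}{}_i^R(M^*,N)=H_i(\Sigma^{-1}T^*\otimes_R N)=H_{i+1}(T^*\otimes_R N)\cong H_{i+1}(\Hom_R(T,N))=\widehat{\Ext}{}_R^{-i-1}(M,N),\]
which is exactly the claim. The test case and the fallback citation are then unnecessary. Note also that the module-level formula you mention only covers modules $N$, so citing it would force a reduction from complexes to modules anyway.

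Once corrected, your argument is a genuinely different route from the paper's. The paper never dualizes $T$. It takes a degreewise finitely generated semi-projective resolution $L$ of $N$ and passes to the module $C_n(L)$ by dimension shifting in the second variable: Lemma 4.1 of Soto Levins for $\widehat{\Ext}$, and Lemma 2.10 of Christensen--Jorgensen for $\widehat{\Tor}$. It then quotes the module case of the duality from the literature.

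Your direct computation is self-contained. It only needs two facts:
\begin{enumerate}
\item $T^*$ is again totally acyclic;
\item the tensor-evaluation isomorphism $\Hom_R(T,N)\cong T^*\otimes_R N$ holds, because each $T_j$ is finitely generated free and, $N$ being bounded, each degree involves only finitely many terms.
\end{enumerate}
In particular it does not use that $H(N)$ is finitely generated. The paper's route gains brevity at the cost of relying on three external lemmas.
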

\begin{proof}
Let $L \stackrel{\simeq}{\longrightarrow} N$ be a degreewise finitely generated semi-projective resolution of $N$ and set $n \in \mathbb{Z}$ such that $N_j =0$ for all $j \geq n$. For each $i \in \mathbb{Z}$, we have: 
\[ \widehat{\operatorname{Ext}}{}_R^{-i-1}(M,N) \cong \widehat{\operatorname{Ext}}{}_R^{-i-1+n}(M,C_n(L)) \cong \widehat{\operatorname{Tor}}{}_{i-n}^R(M^*,C_n(L))  \cong  \widehat{\operatorname{Tor}}{}_{i}^R(M^*,N),  \]
where the first isomorphism is given by \cite[Lemma 4.1]{levins}, the second is \cite[4.4.7]{avramov}, and the third isomorphism in given in \cite[Lemma 2.10]{LarsDave}.
\end{proof}
Before proving our main theorem, we first establish the following case, which will simplify the proof of the main result and improve readability.
\begin{proposition}\label{m1} Let $R$ be a Noetherian ring with a dualizing complex $D$, let $M$ be a finitely generated $R$-module with 
$\operatorname{AB-dim}_R (M)=\operatorname{AB-dim}_R (M^*)=0$, and let $N$ be a complex in $D_b^f(R)$. Then 
\[\Ext_R^i(M,N)=0 \text{ for all }i\gg 0 \Longleftrightarrow \Ext_R^i(N,M \ldt_R D)=0 \text{ for all } i  \gg 0 \]
\end{proposition}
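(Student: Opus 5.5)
The plan is to convert both vanishing conditions into a single condition on $\Tor^R(M^*,N)$, and then pass through Tate (co)homology. Because $\gdim_R(M)=0$, we have $M\simeq \RHom_R(M^*,R)$ and $M^*\simeq \RHom_R(M,R)$.

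\emph{Step 1 (right-hand side).} I would first identify $M\ldt_R D\simeq \RHom_R(M^*,D)$. Concretely, I would use tensor evaluation
\[\RHom_R(\RHom_R(M,R),D)\simeq M\ldt_R \RHom_R(R,D)=M\ldt_R D,\]
which holds since $M$ is finitely generated of finite G-dimension and $D$ has finite injective dimension. I would quote this from the literature (e.g.\ \cite{LarsBook}, or the Auslander/Bass class description of finite G-dimension). Then
\[\RHom_R(N,M\ldt_R D)\simeq \RHom_R(N,\RHom_R(M^*,D)).\]
Since $N$ and $M\ldt_RD$ are homologically bounded, this complex is automatically bounded on the side of positive homological degrees. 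Hence $\Ext^i_R(N,M\ldt_RD)=0$ for $i\gg0$ iff it is homologically bounded. By Lemma \ref{lemma1} (applied to the pair $N,M^*$), this happens iff $M^*\ldt_R N$ is homologically bounded, i.e.\ iff $\Tor_i^R(M^*,N)=0$ for $i\gg0$.

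\emph{Step 2 (left-hand side).} After replacing $N$ by a bounded complex, it remains to show that $\Ext^i_R(M,N)=0$ for $i\gg0$ iff $\Tor^R_i(M^*,N)=0$ for $i\gg0$. I would use that Tate (co)homology agrees with absolute (co)homology in high degrees, since $\gdim_R(M)=\gdim_R(M^*)=0$ and $N$ is bounded: $\widehat{\Ext}{}^i_R(M,N)\cong\Ext^i_R(M,N)$ and $\widehat{\Tor}{}_i^R(M^*,N)\cong \Tor_i^R(M^*,N)$ for $i\gg0$.

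For ($\Leftarrow$): $\operatorname{AB-dim}_R(M^*)=0$, so Proposition \ref{torvanishing} gives $\widehat{\Tor}{}^R_i(M^*,N)=0$ for all $i$. Lemma \ref{lemma:avramov} then gives $\widehat{\Ext}{}^{j}_R(M,N)=0$ for all $j$. Hence $\Ext^i_R(M,N)=0$ for $i\gg0$.

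For ($\Rightarrow$): I need the Ext-analogue of Proposition \ref{torvanishing}, namely that vanishing of $\Ext^{\gg0}_R(M,N)$ forces $\widehat{\Ext}{}^n_R(M,N)=0$ for every $n$.
\begin{itemize}
\item Take a totally acyclic complex $T$ of finitely generated free modules with $M=C_0(T)$.
\item The cosyzygies $C_{-j}=C_{-j}(T)$ have AB-dimension $0$, by Proposition \ref{triangleLemma} applied to $0\to C_{-j+1}\to T_{-j}\to C_{-j}\to0$. The same exact sequences give $\Ext^i_R(C_{-j},N)=0$ for $i\gg0$.
\item Dimension shifting gives $\widehat{\Ext}{}^n_R(M,N)\cong\widehat{\Ext}{}^{n+j}_R(C_{-j},N)$, and this is $\cong\Ext^{n+j}_R(C_{-j},N)$ once $n+j$ is large.
\item Since $B^R_{C_{-j}}=0$, the definition of the Auslander bound gives $\Ext^{m}_R(C_{-j},N)=0$ for $m>-\inf(N)$.
\item Choosing $j$ large therefore kills $\widehat{\Ext}{}^n_R(M,N)$ for each fixed $n$.
\end{itemize}
Lemma \ref{lemma:avramov} then gives $\widehat{\Tor}{}^R_i(M^*,N)=0$ for all $i$, hence $\Tor^R_i(M^*,N)=0$ for $i\gg0$. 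Combined with Step 1, this proves the equivalence.

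The main obstacle I expect is Step 1: justifying the tensor-evaluation isomorphism $M\ldt_R D\simeq\RHom_R(M^*,D)$ over a non-local ring, where $D$ need not be bounded by a finite free complex. If a clean reference is unavailable, I would prove it directly. Resolve $M^*$ by finite free modules on one side and use the cosyzygy description of $M$ on the other, writing $M$ as a kernel inside the complex $T^{*}_{\le -1}$ of finite free modules. Then compare $\Hom_R(T^*,I)$ with $T\otimes_R I$ for a bounded injective resolution $I$ of $D$; these agree degreewise because the $T_i$ are finite free.

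A secondary technical point is the precise range in which Tate and absolute Ext/Tor agree for complexes $N$. Here I would reduce to the module case via $C_n(L)$, as in the proof of Lemma \ref{lemma:avramov}.
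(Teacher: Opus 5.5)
Your proposal is correct and follows the paper's chain of equivalences. You use Lemma~\ref{lemma:avramov} to pass between $\widehat{\Ext}{}^{*}_R(M,N)$ and $\widehat{\Tor}{}_{*}^R(M^*,N)$, then Proposition~\ref{torvanishing} for $M^*$, then Lemma~\ref{lemma1} for the pair $(M^*,N)$, and finally the isomorphism $\RHom_R(M^*,D)\simeq M\ldt_R D$. The paper simply quotes that last isomorphism from \cite[Corollary 12.3.26]{LarsBook}; it holds over any Noetherian ring since $M$ is finitely generated and $\id_R(D)<\infty$, so your worry about the non-local case is unfounded.

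The one real difference is in showing that eventual vanishing of $\Ext_R(M,N)$ forces every $\widehat{\Ext}{}^{*}_R(M,N)$ to vanish. You prove this by hand, via cosyzygies of $M$ and the bound $B^R_{C_{-j}}\le 0$. The paper instead cites \cite[Lemma 3.5 and Lemma 4.4]{levins} for this step.
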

\begin{proof}
We may assume that $N$ is bounded (up to a quasi-isomorphism, which does not affect the eventual vanishing of Ext). Note that $\gdim_R(M)=\gdim_R(M^*)=0$. By Lemma \ref{lemma:avramov}, we have 
\[\widehat{\operatorname{Ext}}{}_R^{-i-1}(M,N) \cong \widehat{\operatorname{Tor}}{}_i^R(M^*,N)\] 
for all $i \in \mathbb{Z}$. Using this isomorphism and \cite[Lemma 3.5 and Lemma 4.4]{levins}, we see that: 
$\operatorname{Ext}_R^{i} (M,N) =0$ for all $i \gg 0$ if and only if $\widehat{\operatorname{Ext}}{}_R^{i}(M,N)=0$ for all $i \in \mathbb{Z}$ if and only if $\widehat{\operatorname{Tor}}{}_i^R(M^*,N)=0$ for all $i \in \mathbb{Z}$. Now, since $\operatorname{AB-dim}_R(M^*)=0$, using Proposition \ref{torvanishing} we see that $\widehat{\operatorname{Tor}}{}_i^R(M^*,N)=0$ for all $i \in \mathbb{Z}$ if and only if $\operatorname{Tor}_{i} ^R(M^*,N)=0$ for all $i \gg 0$.

Moreover, using the equivalences in Lemma \ref{lemma1} we see that $\operatorname{Tor}_{i} ^R(M^*,N)=0$ for all $i \gg 0$ if and only if $\Ext_R^i(N, \RHom_R(M^*,D))=0$ for all $i \gg 0$. Also, note that: \[\RHom_R(M^*, D)\simeq \RHom_R(\RHom_R(M,R), D) \simeq M \ldt_R \RHom_R(R, D)\simeq M \ldt_R D,\]
where the second isomorphism follows from \cite[Corollary 12.3.26]{LarsBook}. Therefore, we have that: $\Ext_R^i(N, \RHom_R(M^*,D))=0$ for all $i\gg 0$ if and only  $\Ext_R^i(N, M\ldt_R D)=0$ for all $i\gg 0$. Combining the equivalences established above, the result follows. 
\end{proof}
We now state the main theorem of this paper, which we call the Generalized Symmetry of Ext.
\begin{theorem}\label{main} Let $R$ be a Noetherian ring with a dualizing complex $D$, and let $M$ and $N$ be  complexes in $D_b^f(R)$ with 
$\operatorname{AB-dim}_R(M)< \infty$ and $\operatorname{AB-dim}_R( \RHom_R(M,R))< \infty$. Then 
\[\Ext_R^i(M,N)=0 \text{ for all }i\gg 0 \Longleftrightarrow \Ext_R^i(N,M \ldt_R D)=0 \text{ for all } i  \gg 0. \]
\end{theorem}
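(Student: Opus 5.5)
Our plan is to reduce Theorem \ref{main} to Proposition \ref{m1} by truncating a resolution of $M$ at degree $n=\operatorname{AB-dim}_R(M)$. We choose a degreewise finitely generated semi-projective resolution $P \stackrel{\simeq}{\longrightarrow} M$ with $P_i=0$ for $i<\inf(M)$. Set $C=C_n(P)$. If $C=0$, then $\pd_R(M)<\infty$. In that case the equivalence is handled directly:
\begin{itemize}
\item The forward direction comes from $\RHom_R(N,M\ldt_R D)\simeq M\ldt_R\RHom_R(N,D)$, which is bounded because $\RHom_R(N,D)\in D_b^f(R)$.
\item The backward direction comes from $\RHom_R(M,N)\simeq \RHom_R(M,R)\ldt_R N$, which is bounded as well.
\end{itemize}
So both sides always vanish eventually. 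We therefore assume $C\neq 0$. By Proposition \ref{prop:coker}, $\operatorname{AB-dim}_R(C)=0$; in particular $\gdim_R(C)=0$.

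There is an exact triangle $P_{\le n-1}\to M\to \Sigma^{n}C\leadsto$ in $D_b^f(R)$, with $P_{\le n-1}$ a bounded complex of finitely generated projectives. Hence $\pd_R(P_{\le n-1})<\infty$. Applying $\RHom_R(-,N)$ shows that $\Ext^i_R(M,N)=0$ for $i\gg0$ if and only if $\Ext^i_R(C,N)=0$ for $i\gg0$, since the finite-projective-dimension term contributes nothing in high degrees. On the other side, we apply $-\ldt_R D$ and then $\RHom_R(N,-)$. The term $P_{\le n-1}\ldt_R D$ satisfies $\RHom_R(N,P_{\le n-1}\ldt_R D)\simeq P_{\le n-1}\ldt_R\RHom_R(N,D)$, which is bounded. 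Hence $\Ext^i_R(N,M\ldt_R D)=0$ for $i\gg0$ if and only if $\Ext^i_R(N,C\ldt_R D)=0$ for $i\gg0$. This reduces the theorem to showing that Proposition \ref{m1} applies to $C$.

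The remaining hypothesis is $\operatorname{AB-dim}_R(C^*)=0$, and this is where the assumption $\operatorname{AB-dim}_R(\RHom_R(M,R))<\infty$ enters. Since $\gdim_R(C)=0$, we have $\RHom_R(C,R)\simeq C^*$, and $\gdim_R(C^*)=0$. Dualizing the triangle gives an exact triangle
\[\Sigma^{-n}C^*\to \RHom_R(M,R)\to \RHom_R(P_{\le n-1},R)\leadsto,\]
and its third term has finite projective dimension. By Proposition \ref{triangleLemma}, $\operatorname{AB-dim}_R(\Sigma^{-n}C^*)<\infty$. We then need to check that $B^R$ is invariant under shift. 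This holds because $\operatorname{P}_R(\Sigma^{-n}C^*,N)=\operatorname{P}_R(C^*,N)+n$ uniformly in $N$; alternatively, it follows from Proposition \ref{prop:ineq2}(1). Consequently $\operatorname{AB-dim}_R(C^*)<\infty$, and Proposition \ref{prop:ineq2}(1) yields $\operatorname{AB-dim}_R(C^*)=\gdim_R(C^*)=0$. Proposition \ref{m1} now gives the equivalence for $C$, and the reductions above transfer it back to $M$.

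The main obstacle we anticipate is the bookkeeping in the triangle steps. The shifts must be tracked correctly, and the finite-projective-dimension pieces must contribute only bounded homology. This relies on the tensor-evaluation isomorphism $\RHom_R(N,F\ldt_R X)\simeq F\ldt_R\RHom_R(N,X)$ for $F$ a perfect complex; we would invoke it from \cite{LarsBook}.
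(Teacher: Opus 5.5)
Your proposal is correct and follows essentially the same route as the paper: truncate $P$ at $n=\operatorname{AB-dim}_R(M)$, dualize the triangle $P_{\le n-1}\to M\to\Sigma^nC_n(P)\leadsto$ and apply Proposition \ref{triangleLemma} to get $\operatorname{AB-dim}_R(C_n(P)^*)=0$, then use the two induced triangles to reduce to Proposition \ref{m1}. One harmless slip is in the shift step: in fact $\operatorname{P}_R(\Sigma^{-n}C^*,N)=\operatorname{P}_R(C^*,N)-n$ (not $+n$), and Proposition \ref{prop:ineq2}(1) alone does not give shift invariance of finite AB-dimension, but your direct argument does.
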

\begin{proof}
Set $n=\operatorname{AB-dim}_R (M)$ and let $P \stackrel{\simeq}{\longrightarrow} M$ be a degreewise finitely generated semi-projective resolution such that $P_i=0$ for all $i<\inf M$. We may assume that $C_n(P) \neq 0$, because otherwise $\operatorname{pd}_R(M)<\infty$, and the conclusion follows. Then $0=\operatorname{AB-dim}_R(C_n(P))=\gdim_R(C_n(P))$ by Proposition \ref{prop:coker}. 

    Now considering the short exact sequence of complexes $0 \rightarrow P_{\leq n-1} \rightarrow P \rightarrow P_{\geq n} \rightarrow 0$ and  using that $P_{\geq n} \simeq \Sigma^n C_n(P)$,
    we have an exact triangle 
\begin{equation}\label{impor}
        P_{\leq n-1}  \to P \to  \Sigma^n C_n(P) \leadsto.
    \end{equation} 
This induces the exact triangle
\[\RHom_R(\Sigma^n C_n(P), R) \to \RHom_R(M,R) \to \RHom_R( P_{\leq n-1}, R)  \leadsto \]
 Since $\operatorname{pd}_R (\RHom_R(P_{\leq n-1}, R))$ and $\operatorname{AB-dim}_R (\RHom_R(M,R))$ are both finite, it follows from Proposition \ref{triangleLemma} that $\RHom_R(C_n(P), R)\simeq (C_n(P))^*$ has finite AB-dimension. Moreover, \linebreak $\operatorname{AB-dim}_R ((C_n(P))^*)=0$ since $C_n(P)$ is totally reflexive. 

On other hand, using again \eqref{impor}, we have the following two exact triangles
\begin{align}
    \RHom_R(\Sigma^n C_n(P), N)& \to   \RHom_R(M,N) \to \RHom_R(P_{\leq n-1}, N) \leadsto \label{4.2}\\
    \RHom_R(N, P_{\leq {n-1}} \ldt_R D)& \to  \RHom_R(N, M \ldt_R D) \to  \RHom_R(N, \Sigma^n C_n(P) \ldt_R D)\leadsto \label{4.3}
\end{align}
Since $\operatorname{pd}_R (P_{\leq n-1})<\infty$, note that $\RHom_R(P_{\leq n-1}, N)$ and  $\RHom_R(N, P_{\leq {n-1}} \ldt_R D)$ are homologically bounded, so we have that:     \begin{itemize}
        \item From (\ref{4.2}): $\Ext_R^i(M,N)=0$ for all $i\gg 0$ if and only if $\Ext_R^i(C_n(P), N)=0$ for all $i\gg 0$.
        \item From (\ref{4.3}):
        $\Ext_R^i(N,M \ldt_R D)=0$ for all $i\gg 0$ if and only if $\Ext_R^i( N, C_n(P) \ldt_R D)=0$ for all $i\gg 0$.
\end{itemize}
However, by Proposition \ref{m1}, we have that $\Ext_R^i(C_n(P), N)=0$ for all $i \gg 0$ is equivalent to $\Ext_R^i(N, C_n(P)\ldt_R D)=0$ for all $i \gg 0$. Therefore, $\Ext_R^i(M,N)=0$ for all $i \gg0$ if and only if $\Ext_R^i(N,M \ldt_R D) =0 $ for all $i \gg 0$, as required. 
\end{proof}
\begin{remark}\label{rem}

It remains unknown for us if, for a complex $M \in D_b^f(R)$, the assumption that $\operatorname{AB-dim}_R(M)< \infty$ in our main result implies that $\operatorname{AB-dim}_R(\RHom_R(M,R))< \infty$.  In Subsection \ref{subsection:5.1}, we suggest a potential approach to show this and then remove the assumption that $\operatorname{AB-dim}_R(\RHom_R(M,R))$ is finite. Even if that is true, we remark that $M$ having finite AB-dimension plays a crucial role to have Generalized Symmetry in the vanishing of Ext as in Theorem \ref{main}. That is, we observe that the equivalence in Theorem \ref{main} may fail if $M$ does not have finite Gorenstein dimension and finite Auslander bound. 
\begin{enumerate}
    \item Jorgensen and Şega \cite{JO-SE} constructed an intriguing example showing that symmetry in the vanishing of Ext may fail over a Gorenstein local ring. Their example is given in \cite[Corollary 4.2]{JO-SE}. Moreover, in \cite[(4.14)]{JO-SE}, they show that the module $M$ appearing in their construction has infinite Auslander bound.
    \item Let $(R,\mathfrak{m},k)$ be a local ring with a dualizing complex $D$. Note that $b_k^R< \infty$. Considering $M=k$ and $N=R$, one can see that $\Ext_R^i(R,k \ldt_R D) =0$ for all $i \gg 0$ (see \cite[Theorem 8.1.8]{LarsBook}), but $\operatorname{Ext}_R^i(k,R)=0$ for all $i \gg0$ does not happen unless $R$ is Gorenstein, that is, $\gdim_R(k)< \infty$. 
\end{enumerate}
\end{remark}
\section{Corollaries of the main result}\label{section:corollaries}
In this section, we derive corollaries of our main result that extend several symmetry results for the vanishing of Ext from finitely generated modules over Gorenstein local rings to homologically finite complexes over Noetherian rings with a dualizing complex. Compared to earlier results in the literature, our results require weaker assumptions on the ring and apply in the more general setting of homologically finite complexes.

The following corollary provides an extension of the foundational result of Huneke and Jorgensen for AB rings \cite[Theorem 4.1]{JorgensenHuneke}. It should also be compared with \cite[Theorem B.2]{CH}.

\begin{corollary}\label{corollary}
Let $R$ be a Noetherian ring with a dualizing complex $D$, and assume that $R$ satisfies \textnormal{(AC)}. Let $M$ and $N$ be complexes in $D_b^f(R)$ with $\gdim_R(M)< \infty$. The following conditions are equivalent:
\begin{enumerate}
    \item $\Ext_R^i(M,N)=0$ for all $i \gg 0$.
    \item $\Ext_R^i(N,M \ldt_R D)=0$ for all $i \gg 0$.
    \item $\Tor_i^R(M,\RHom_R(N,D))=0$ for all $i \gg 0$. 
\end{enumerate}
\end{corollary}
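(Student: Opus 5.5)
The plan is to reduce the corollary to Theorem \ref{main} together with Lemma \ref{lemma1}. The equivalence (2)$\Leftrightarrow$(3) needs no hypothesis on $M$ beyond $D_b^f(R)$. Apply Lemma \ref{lemma1} to the pair $M$ and $\RHom_R(N,D)$; the latter lies in $D_b^f(R)$ since $D$ is dualizing. The condition $\Tor_i^R(M,\RHom_R(N,D))=0$ for $i\gg 0$ says that $M\ldt_R \RHom_R(N,D)$ is homologically bounded, because this complex is always bounded on the left by boundedness of the factors. By Lemma \ref{lemma1} this is equivalent to $\RHom_R(N,\RHom_R(M,\RHom_R(\RHom_R(N,D),D)))$, or more directly to $\RHom_R(\RHom_R(N,D),\RHom_R(M,D))$, being bounded. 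Alternatively, I would use the isomorphism
\[\RHom_R(N, M\ldt_R D)\simeq \RHom_R(N,\RHom_R(\RHom_R(M,D),D))\simeq \RHom_R(\RHom_R(M,D)\ldt_R N, D),\]
whose first step is \cite[Corollary 12.3.26]{LarsBook}. Combining this with duality and the swap $\RHom_R(N,\RHom_R(M',D))\simeq\RHom_R(M',\RHom_R(N,D))$ from the proof of Lemma \ref{lemma1}, I expect boundedness of $\RHom_R(N,M\ldt_R D)$ (a complex already bounded below in cohomological degree) to be equivalent to boundedness of $M\ldt_R\RHom_R(N,D)$. The cleanest route is to write $M\ldt_R \RHom_R(N,D)\simeq \RHom_R(N, M\ldt_R D)$ via tensor evaluation, valid since $N\in D^f_b$ and $M$ is homologically finite, bounded on the right. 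This gives (2)$\Leftrightarrow$(3) directly, since both sides have homology bounded on the appropriate side automatically.

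For (1)$\Leftrightarrow$(2) I will verify the hypotheses of Theorem \ref{main}: that $\operatorname{AB-dim}_R(M)<\infty$ and $\operatorname{AB-dim}_R(\RHom_R(M,R))<\infty$. Since $\gdim_R(M)<\infty$, the complex $M^\dagger=\RHom_R(M,R)$ lies in $D_b^f(R)$ and also has finite G-dimension by biduality. So it suffices to show that (AC) forces $B_X^R<\infty$ for every $X\in D_b^f(R)$ of finite G-dimension.

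This step is the main obstacle. The plan is to take a degreewise finitely generated semi-projective resolution $P\to X$ with $P_i=0$ for $i<\inf X$, set $s=\sup X$, and let $C=C_s(P)$, a finitely generated module. Condition (AC) gives $b_C^R<\infty$. Then \cite[Proposition 3.6]{levins}, used exactly as in the proof of Proposition \ref{prop:ineq2}(1), yields $B_X^R<\infty$ because $\gdim_R(X)<\infty$. Applying this to $X=M$ and $X=M^\dagger$ gives both AB-dimensions finite, and Theorem \ref{main} then gives (1)$\Leftrightarrow$(2).

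If tensor evaluation in (2)$\Leftrightarrow$(3) needs care, I would fall back on Lemma \ref{lemma1}(3)$\Leftrightarrow$(1) with the pair $(M,\RHom_R(N,D))$. That gives boundedness of $\RHom_R(M,\RHom_R(\RHom_R(N,D),D))\simeq\RHom_R(M,N)$, which is precisely (1). So (1)$\Leftrightarrow$(3) holds unconditionally, and together with Theorem \ref{main} this closes the cycle.
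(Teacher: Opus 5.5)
Your closing argument is the paper's proof. $\RHom_R(M,R)$ has finite G-dimension, and (AC) with \cite[Proposition 3.6]{levins} makes $B^R_M$ and $B^R_{\RHom_R(M,R)}$ finite. Theorem \ref{main} then gives (1)$\Leftrightarrow$(2), and Lemma \ref{lemma1} for the pair $M$, $\RHom_R(N,D)$ gives (1)$\Leftrightarrow$(3) unconditionally.

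You should, however, delete your first paragraph, because its claims are false. The equivalence (2)$\Leftrightarrow$(3) does need a hypothesis on $M$. Take $R$ local and not Gorenstein, $M=k$ and $N=R$. Condition (2) holds, since $k\ldt_R D$ is homologically bounded to the right (Remark \ref{rem}(2)). Condition (3) reads $\Tor_i^R(k,D)=0$ for $i\gg0$, i.e.\ $\pd_R D<\infty$, which forces $R$ to be Gorenstein.

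Two errors lead you astray.
\begin{enumerate}
\item $\RHom_R(\RHom_R(M,D),D)\simeq M$, not $M\ldt_R D$. Hom evaluation gives $M\ldt_R D\simeq\RHom_R(\RHom_R(M,R),D)$, so (2) is really a statement about $\RHom_R(M,R)\ldt_R N$. This is why Theorem \ref{main}, through Proposition \ref{m1}, needs a hypothesis on $\RHom_R(M,R)$.
\item The tensor evaluation $M\ldt_R\RHom_R(N,D)\simeq\RHom_R(N,M\ldt_R D)$ requires a finiteness condition such as $\operatorname{fd}_R M<\infty$ or $\pd_R N<\infty$. Even where it holds (for instance $N=R$), it does not give (2)$\Leftrightarrow$(3). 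Vanishing of $\Ext^i$ for $i\gg0$ only controls homology in degrees $\ll 0$, and $M\ldt_R D$ need not be homologically bounded above when $\gdim_R(M)=\infty$.
\end{enumerate}

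Your final cycle uses only (1)$\Leftrightarrow$(3) and (1)$\Leftrightarrow$(2), so the proof is sound once that paragraph is removed.
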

\begin{proof}
One can see from \cite[Theorem 19.4.18]{LarsBook} that $\gdim_R(\RHom_R(M,R))< \infty$. Therefore, since $R$ satisfies (AC), we have $B_M^R< \infty$ and $B_{\RHom_R(M,R)}^R < \infty$ by \cite[Proposition 3.6]{levins}. That is, $\operatorname{AB-dim}_R(M)$ and $\operatorname{AB-dim}_R(\RHom_R(M,R))$ are both finite. The equivalence (1) $\Leftrightarrow$ (2) is then a consequence of Theorem \ref{main} and (1) $\Leftrightarrow$ (3) follows by Lemma \ref{lemma1}.
\end{proof}
\begin{remark} 
\begin{enumerate}
   \item  Note that Remark \ref{rem}(2) shows the assumption $\gdim_R(M) < \infty$ in Corollary \ref{corollary} cannot be removed.
    \item Interesting examples of non-Gorenstein rings satisfying (AC) can be found in the literature; see, for instance, \cite[Example 3.2]{LarsHolmAC} and \cite[Theorem A]{TakahashiUAC}. Moreover, Cohen-Macaulay local rings of finite Cohen-Macaulay type, Golod rings, and trivial extensions of a local ring by its residue field satisfy (AC); see \cite[Theorem 1.2]{LarsHolmAC}, \cite[Proposition 1.4]{JO-LI}, and \cite[Proposition 3.7]{IGOR}, respectively.
\end{enumerate}
\end{remark}


A non-zero $R$-module $M$ is said to have finite \textit{quasi-projective dimension} provided that there exists a bounded complex $P$ of projective $R$-modules, not acyclic, whose homology modules are either zero or finite direct sums of copies of $M$. The quasi-projective dimension of $M$ is denoted by $\operatorname{qpd}_R(M)$. The quasi-projective dimension of the zero module is set to be $-\infty$. For more on quasi-projective dimension, see \cite{GJT}.

\begin{lemma}\label{victor:lemma}
Let $M$ be a finitely generated $R$-module with $\operatorname{qpd}_R(M) < \infty$ and $\gdim_R(M)<\infty$. Then $\operatorname{AB-dim}_R(\RHom_R(M,R))<\infty$.
\end{lemma}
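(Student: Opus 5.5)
The plan is to treat the two halves of $\operatorname{AB-dim}_R(\RHom_R(M,R))$ separately. By definition, $\operatorname{AB-dim}$ is the supremum of the Gorenstein dimension and the Auslander bound $B$.

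The Gorenstein half is immediate. Since $\gdim_R(M)<\infty$, \cite[Theorem 19.4.18]{LarsBook} gives $\gdim_R(\RHom_R(M,R))<\infty$, exactly as in the proof of Corollary \ref{corollary}. So everything reduces to showing $B^R_{\RHom_R(M,R)}<\infty$.

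First I would reduce to a totally reflexive module, copying the opening of the proof of Theorem \ref{main}. Set $g=\gdim_R(M)$ and take a degreewise finitely generated projective resolution $P\to M$. Put $K=C_g(P)$, a $g$-th syzygy of $M$. Then $\gdim_R(K)=0$, and the triangle $P_{\leq g-1}\to P\to \Sigma^g K\leadsto$ dualizes to a triangle
\[\RHom_R(\Sigma^g K,R)\to \RHom_R(M,R)\to \RHom_R(P_{\leq g-1},R)\leadsto.\]
Its third term is perfect. Proposition \ref{triangleLemma} then shows that it suffices to prove $\operatorname{AB-dim}_R(K^*)<\infty$. Since $K^*$ is totally reflexive, this amounts to $b^R_{K^*}<\infty$. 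The needed input from \cite{GJT} is that finite quasi-projective dimension passes to syzygies, so $\operatorname{qpd}_R(K)<\infty$. The assumption $\gdim_R(M)<\infty$ matters here: it guarantees that $K$ is totally reflexive, so that $K^{**}\cong K$.

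Next I would translate the Ext condition on $L=K^*$ into a Tor condition on $K$. Let $N$ be a bounded complex in $D_b^f(R)$ with $\Ext^i_R(L,N)=0$ for $i\gg0$. Since $\gdim_R(L)=0$, we have $\Ext^i_R(L,N)\cong\widehat{\Ext}{}^i_R(L,N)$ for $i$ large, so $\widehat{\Ext}{}^i_R(L,N)=0$ for $i\gg0$. Applying Lemma \ref{lemma:avramov} with $L^*\cong K$ gives $\widehat{\Tor}{}^R_j(K,N)=0$ for all $j\ll0$. For a totally reflexive $K$, the groups $\widehat{\Tor}{}^R_j(K,N)$ with $j$ very negative are ordinary Tor groups $\Tor^R_{j+s}(K',N)$ against cosyzygies $K'=C_{-s}(T)$ of $K$ in a totally acyclic complex $T$. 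The task is then to show that this vanishing forces $\widehat{\Tor}{}^R_j(K,N)=0$ for all $j\geq -c$, where $c$ is a constant depending only on $K$. If that holds, Lemma \ref{lemma:avramov} read backwards gives $\widehat{\Ext}{}^i_R(L,N)=0$ for all $i>c-1$. Hence $\Ext^i_R(L,N)=0$ for $i>\max\{c-1,0\}$, and therefore $b^R_L<\infty$. The passage from bounded $N$ to arbitrary complexes in $D_b^f(R)$ is handled by \cite[Lemma 3.5, Proposition 3.6]{levins}.

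The main obstacle is that uniform rigidity statement. It is here that $\operatorname{qpd}_R(K)<\infty$ must enter. I would take a quasi-projective resolution $Q$ of $K$: a bounded complex of finitely generated projectives whose homology consists of sums of copies of $K$. Then $Q\ldt_R N$ and $\RHom_R(Q,N)$ are bounded, and one can run the long exact sequences coming from the truncation triangles of $Q$, as in \cite[Section 6]{GJT}. The aim is to show that eventual vanishing of $\widehat{\Tor}{}^R(K,N)$ on one side propagates to vanishing beyond a bound determined by $\operatorname{qpd}_R(K)$ and the length of $Q$. If that route stalls, the fallback is to quote directly the (A)C-type bound for modules of finite quasi-projective dimension from \cite{GJT}, once for Ext and once for Tor, and to use them to control the two tails of Tate cohomology.
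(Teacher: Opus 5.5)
Your reduction to the totally reflexive module $K=C_g(P)$ and the triangle argument via Proposition~\ref{triangleLemma} match the paper's proof. The heart of the lemma, however, is the claim $b^R_{K^*}<\infty$, and you never prove it. You state the ``uniform rigidity statement'' only as an aim, and the fallback does not work.

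By Lemma~\ref{lemma:avramov}, the hypothesis $\Ext^i_R(K^*,N)=0$ for $i\gg 0$ is a statement about the \emph{negative} tail of $\widehat{\Tor}{}^R_\ast(K,N)$, that is, about Tor against the cosyzygies of $K$. The bounds from \cite{GJT} that you can apply to $K$ control only $\Tor^R_i(K,N)$ and $\Ext^i_R(K,N)$ for $i\gg 0$. These are the positive tails of $\widehat{\Tor}{}^R_\ast(K,N)$ and of $\widehat{\Ext}{}_R^\ast(K,N)$, and neither is the tail you need. Controlling that tail by an Ext bound requires the Auslander bound of $K^*$, which is exactly what is being proved. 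So as long as you only know $\operatorname{qpd}_R(K)<\infty$, the argument is circular.

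There is also an index slip. To get $\widehat{\Ext}{}^i_R(L,N)=0$ for $i>c-1$ you need $\widehat{\Tor}{}^R_j(K,N)=0$ for $j\leq -c-1$. Vanishing for $j\geq -c$ corresponds to $i\leq c-1$, and it would include ordinary $\Tor^R_j(K,N)$ for $j\gg 0$, about which your hypothesis says nothing.

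The missing idea is that finite quasi-projective dimension passes from a totally reflexive module to its dual \cite[Proposition 6.14]{GJT}. This is easy to see directly. Take a quasi-projective resolution $Q$ of $K$ by finitely generated projectives; this is possible since a bounded complex of projectives with finitely generated homology is perfect. Because $\Ext^i_R(K,R)=0$ for $i>0$, the truncation triangles of $Q$ show that $\Hom_R(Q,R)$ is a bounded complex of finitely generated projectives with $H_{-i}(\Hom_R(Q,R))\cong \Hom_R(H_i(Q),R)$. Each such homology module is a finite sum of copies of $K^*\neq 0$, so $\operatorname{qpd}_R(K^*)<\infty$. Then \cite[Corollary 6.4]{GJT}, applied to $K^*$, gives $b^R_{K^*}<\infty$ at once. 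Since $\gdim_R(K^*)=0$, this yields $\operatorname{AB-dim}_R(K^*)<\infty$, and the Tate-homology detour is unnecessary. This is exactly how the paper handles the totally reflexive case before running the same triangle argument you describe.
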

\begin{proof}
First, we prove the case in which $\gdim_R(M)=0$. In this case $\RHom_R(M,R) \simeq M^*$, $\gdim_R(M^*)=0$ and $\operatorname{qpd}_R(M^*)< \infty$; see \cite[Proposition 6.14]{GJT}. Then, using \cite[Corollary 6.4]{GJT}, one can see that $b_{M^*}^R< \infty$ and then $\operatorname{AB-dim}_R(M^*)< \infty$, as required.

To prove the general case, let $P \stackrel{\simeq}{\longrightarrow} M$ be a projective resolution of $M$ and $n=\gdim_R(M)$. Consider the short exact sequence of complexes $0 \rightarrow P_{\leq n-1} \rightarrow P \rightarrow P_{\geq n} \rightarrow0$ and using that $P_{\geq n} \simeq \Sigma^n C_n(P)$, we have an exact triangle
\begin{align}\label{trangle3}
\RHom_R(\Sigma^n C_n(P), R) \to \RHom_R(M,R) \to \RHom_R( P_{\leq n-1}, R)  \leadsto. 
\end{align}
Clearly, $\pd_R(\RHom_R(P_{\leq n-1},R))< \infty$. By \cite[Proposition 3.3(4)]{GJT}, $\operatorname{qpd}_R(C_n(P))< \infty$. Also, $\gdim_R(C_n(P))=0$ and $\RHom_R(C_ n(P),R)$ have  finite AB-dimension, from the case proved previously. Thus, applying Proposition \ref{triangleLemma} to the exact triangle (\ref{trangle3}) we see that $\RHom_R(M,R)$ has finite AB-dimension.
\end{proof}

The following corollary should be compared with \cite[Theorem 6.16]{GJT}. Compared with this interesting result of Gheibi, Jorgensen, and Takahashi, our result considers non-Gorenstein Noetherian rings with a dualizing complex and extends the conclusion to the setting where \(N\) is a homologically finite complex.
\begin{corollary}\label{cor:qpd}
Let $R$ be a Noetherian ring with a dualizing complex $D$, and let $M$ be a finitely generated $R$-module with $\operatorname{qpd}_R(M)< \infty$ and $\gdim_R(M)< \infty$. Let $N$ be a complex in $D_b^f(R)$.  Then 
\[\Ext_R^i(M,N)=0 \text{ for all }i\gg 0 \Longleftrightarrow \Ext_R^i(N,M \ldt_R D)=0 \text{ for all } i  \gg 0. \] 
\end{corollary}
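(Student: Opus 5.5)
The plan is to reduce the statement to Theorem \ref{main}. That theorem needs two inputs: $\operatorname{AB-dim}_R(M)<\infty$ and $\operatorname{AB-dim}_R(\RHom_R(M,R))<\infty$. The second is exactly Lemma \ref{victor:lemma}, which applies directly under our hypotheses $\operatorname{qpd}_R(M)<\infty$ and $\gdim_R(M)<\infty$. So the work is to show $\operatorname{AB-dim}_R(M)<\infty$. Since $\gdim_R(M)<\infty$ is given, by Definition \ref{def:AB-dim} it suffices to prove $B_M^R<\infty$.

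To bound $B_M^R$, I would first handle the case $\gdim_R(M)=0$. By \cite[Corollary 6.4]{GJT}, a module of finite quasi-projective dimension has finite Auslander bound $b_M^R$; this is the same input used for $M^*$ in the first paragraph of the proof of Lemma \ref{victor:lemma}. Then $\gdim_R(M)=0$ together with \cite[Lemma 3.5 and Corollary 4.3]{levins} (Remark after Definition \ref{def:AB-dim}) gives $B_M^R=b_M^R<\infty$.

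For the general case, set $n=\gdim_R(M)$ and take a projective resolution $P\to M$ by finitely generated modules. As in the proof of Lemma \ref{victor:lemma}, consider the triangle
\[P_{\leq n-1}\to P\to \Sigma^n C_n(P)\leadsto.\]
Here $C_n(P)$ has $\gdim_R(C_n(P))=0$. It also has $\operatorname{qpd}_R(C_n(P))<\infty$ by \cite[Proposition 3.3(4)]{GJT}, so $\operatorname{AB-dim}_R(C_n(P))<\infty$ by the previous paragraph, and the same holds for its shift. Since $\operatorname{pd}_R(P_{\leq n-1})<\infty$, Proposition \ref{triangleLemma} gives $\operatorname{AB-dim}_R(M)<\infty$. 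Alternatively, one could invoke \cite[Proposition 3.6]{levins} directly with the cokernel $C_n(P)$, as in the proof of Proposition \ref{prop:ineq2}.

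With both finiteness conditions established, Theorem \ref{main} yields the equivalence for every $N\in D_b^f(R)$. The main point to check carefully is that \cite[Corollary 6.4]{GJT} indeed gives a uniform bound on the eventual vanishing of $\Ext_R^i(M,-)$ over a general Noetherian ring, not only a local one. If it holds only locally, I would fall back on the triangle argument, combined with the fact that finite $\operatorname{qpd}$ gives the needed Ext-vanishing bounds. Everything else is bookkeeping already carried out in Lemma \ref{victor:lemma}.
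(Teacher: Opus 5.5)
Your proposal is correct and follows the paper's proof. The paper gets $\operatorname{AB-dim}_R(M)<\infty$ from \cite[Corollary 6.4]{GJT} together with $B_M^R=b_M^R$ (the Remark after Definition \ref{def:AB-dim}), gets $\operatorname{AB-dim}_R(\RHom_R(M,R))<\infty$ from Lemma \ref{victor:lemma}, and then applies Theorem \ref{main}. The one difference is your reduction to $\gdim_R(M)=0$ via the triangle and Proposition \ref{triangleLemma}: it is harmless but unnecessary, because \cite[Corollary 6.4]{GJT} applies to $M$ itself and $B_M^R=b_M^R$ holds for every finitely generated module of finite G-dimension, which is how the paper argues in one line.
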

\begin{proof}
Since $\operatorname{qpd}_R(M)< \infty$, then $b_M^R< \infty$ by \cite[Corollary 6.4]{GJT}, and so $\operatorname{AB-dim}_R(M)< \infty$. Moreover, by Lemma \ref{victor:lemma}, $\operatorname{AB-dim}_R(\RHom_R(M,R))< \infty$. The desired equivalence follows then from Theorem \ref{main}.
\end{proof}

\subsection{A general approach and further considerations}\label{subsection:5.1}
In order to suggest a potential approach to drop the assumption $\operatorname{AB-dim}_R(\RHom_R(M,R))$ in Theorem \ref{main}, we pose the following question:
\begin{question}\label{question5.5}
Let $M$ be a complex in $D_b^f(R)$. If $\operatorname{AB-dim}_R(M)<\infty$, does it follow that $\operatorname{AB-dim}_R(\RHom_R(M,R))< \infty$?
\end{question}

Note that Lemma \ref{victor:lemma} gives a particular answer to Question \ref{question5.5}. To provide an application to complexes of finite complete intersection dimension and to formulate Question \ref{question5.5} equivalently in terms of finitely generated modules, we establish the following lemma.

\begin{lemma}\label{lemma:equivalences}
Let $\Hdim$ be a homological invariant defined on $D_b^f(R)$ with the following properties:  
\begin{enumerate}
    \item $\gdim_R(M) \leq \Hdim_R(M) \leq \pd_R(M)$ for all $M \in D_b^f(R)$, and when one of these invariants is finite it is equal to those on its left. 
    \item Let $M_1 \rightarrow M_2 \rightarrow M_3 \leadsto$ be an exact triangle in $D_b^f(R)$. Let $\lbrace i,j,k \rbrace= \lbrace 1,2,3 \rbrace$. If $M_i$ has finite $\Hdim$ and $M_j$ has finite projective dimension, then $M_k$ has finite $\Hdim$. 
    \item Finiteness of $\Hdim$ is preserved under shifting in $D_b^f(R)$.
\end{enumerate}
Then the following conditions are equivalent:
\begin{enumerate}
    \item[(a)] For each finitely generated $R$-module $M$, one has
    \[\Hdim_R(M) =0 \Longrightarrow \Hdim_R(M^*) =0.\] 
    \item[(b)] For each complex $M \in D_b^f(R)$, one has
    \[\Hdim_R(M) < \infty \Longrightarrow \Hdim_R(\RHom_R(M,R))< \infty.\]
\end{enumerate}
\end{lemma}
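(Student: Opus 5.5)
The implication (b) $\Rightarrow$ (a) is the easy direction. Let $M$ be a finitely generated module with $\Hdim_R(M)=0$. By property (1), $\gdim_R(M)=0$, so $M$ is totally reflexive and $\RHom_R(M,R)\simeq M^*$. Then (b) gives $\Hdim_R(M^*)<\infty$. By property (1) again, $\Hdim_R(M^*)=\gdim_R(M^*)$, and this is $0$ because the dual of a totally reflexive module is totally reflexive. (If $M^*=0$, then $M=0$ and there is nothing to check.)

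For (a) $\Rightarrow$ (b), I would copy the dévissage used in the proof of Theorem \ref{main} and Lemma \ref{victor:lemma}. Let $M\in D_b^f(R)$ with $\Hdim_R(M)<\infty$. By property (1), $n:=\gdim_R(M)=\Hdim_R(M)<\infty$. Take a degreewise finitely generated semi-projective resolution $P\xrightarrow{\simeq} M$ with $P_i=0$ for $i<\inf M$. If $C_n(P)=0$, then $\pd_R(M)<\infty$, so $\RHom_R(M,R)$ has finite projective dimension and hence finite $\Hdim$ by (1). Otherwise:
\begin{enumerate}
\item There is the exact triangle $P_{\leq n-1}\to P\to \Sigma^n C_n(P)\leadsto$, with $P_{\leq n-1}$ a bounded complex of finitely generated projectives. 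By (3) and (2), $\Hdim_R(\Sigma^n C_n(P))<\infty$ and hence $\Hdim_R(C_n(P))<\infty$.
\item Since $n\geq \sup M$, \cite[Theorem 19.4.1]{LarsBook} gives $\gdim_R(C_n(P))=0$, as in Proposition \ref{prop:coker}. By (1), $\Hdim_R(C_n(P))=0$.
\item Hypothesis (a) now gives $\Hdim_R(C_n(P)^*)=0$. Since $C_n(P)$ is totally reflexive, $\RHom_R(\Sigma^n C_n(P),R)\simeq \Sigma^{-n}C_n(P)^*$, which has finite $\Hdim$ by (3).
\item Apply $\RHom_R(-,R)$ to the triangle to get
\[\RHom_R(\Sigma^n C_n(P),R)\to \RHom_R(M,R)\to \RHom_R(P_{\leq n-1},R)\leadsto.\]
The third term is a bounded complex of finitely generated projectives, so it has finite projective dimension. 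Property (2) then yields $\Hdim_R(\RHom_R(M,R))<\infty$.
\end{enumerate}

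The main point needing care is the bookkeeping of finiteness. Property (2) applies only to triangles in $D_b^f(R)$, so I would check that all terms are homologically finite; this holds since $\gdim_R(M)<\infty$ implies $\RHom_R(M,R)\in D_b^f(R)$. I would also check that shifts and rotations of triangles are handled using (3). Everything else is formal.
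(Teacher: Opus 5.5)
Your proof is correct and follows essentially the same route as the paper. You use the same dévissage through the triangle $P_{\leq n-1}\to P\to \Sigma^n C_n(P)\leadsto$, reduce to the totally reflexive module $C_n(P)$, apply (a) there, and finish with (2) on the dualized triangle. The only differences are that you spell out the direction (b) $\Rightarrow$ (a), which the paper dismisses as trivial, and that you cite explicitly why $\gdim_R(C_n(P))=0$.
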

\begin{proof}
The implication (b) $\Rightarrow$ (a) is trivial. We show that (a) $\Rightarrow$ (b). Let $M \in D_b^f(R)$ be a complex. Let $P \stackrel{\simeq}{\longrightarrow}M$ be a degreewise finitely generated semi-projective resolution with $P_i=0$ for all $i < \inf M$. Set $n=\Hdim_R(M)$. If $C_n(P)=0$, then $\pd_R(M) < \infty$ and the result follows. So, assume that $C_n(P) \neq 0$. Consider the exact sequence $0 \rightarrow P_{\leq n-1} \rightarrow P \rightarrow P_{\geq n} \rightarrow0$. Since $P_{\geq n} \simeq \Sigma^n C_n(P)$, one can see from (1-3) that  $\Hdim_R(C_n(P))=\gdim_R(C_n(P))=0$. Moreover, we have the following exact triangle in $D_b^f(R)$:
\begin{align}\label{triangleN}
\RHom_R(\Sigma^n C_n(P),R) \rightarrow \RHom_R(M,R) \rightarrow \RHom_R(P_{\leq n-1},R) \leadsto.
\end{align}
Note that, using (a), $\RHom_R(C_n(P),R) \simeq (C_n(P))^*$ has finite $\Hdim$ and so also the complex $\RHom_R(\Sigma^nC_n(P),R) \simeq \Sigma^{-n} \RHom_R(C_n(P),R)$ from  (3). Finally, since we have that $\operatorname{pd}_R(\RHom_R(P_{\leq n-1},R))< \infty$, the conclusion follows using the exact triangle (\ref{triangleN}) and (2).
\end{proof}

\begin{remark}\label{ci-dim}
The complete intersection dimension (CI-dim) is a homological invariant defined on $D_b^f(R)$ satisfying the conditions (1)-(3) and (a) (and hence (b)) of Lemma \ref{lemma:equivalences}. 
Indeed, (1) is established in \cite[Proposition 3.3]{sather}, (2) follows from a similar argument that \cite[Lemma 3.6]{sather}, (3) is straightforward to verify. Moreover, to show that CI-dim satisfies (a), we can reduce to the local case and then apply \cite[Lemma 3.5]{be-jo}. Therefore, if $M \in D_b^f(R)$ is a complex of finite CI-dimension, then $\RHom_R(M,R)$ has finite CI-dimension.
\end{remark}

\begin{setup}\label{setup}
Consider $\Hdim$ a homological invariant defined on $D_b^f(R)$ satisfying conditions (1)-(3) and (a) (and hence (b)) of Lemma \ref{lemma:equivalences}, and assume that $\operatorname{AB-dim}_R(M) \leq \Hdim_R(M)$ for every complex $M \in D_b^f(R)$. 
\end{setup}
The following corollary extends the symmetry in the vanishing of Ext over Gorenstein local rings proved by Jørgensen in \cite[Theorem 4.1]{PE-JO} for finitely generated modules of finite complete intersection dimension. Our result not only strengthens Jørgensen's result by relaxing the assumptions on the ring, but also extends the conclusion to homologically finite complexes assuming that only one of them has finite complete intersection dimension.
\begin{corollary}\label{cor:hdim}
Let $R$ be a Noetherian ring with a dualizing complex $D$. Consider $\Hdim$ to be a homological invariant defined in $D_b^f(R)$ as in Setup \ref{setup}. Let $M$ and $N$ be complexes in $D_b^f(R)$ with $\Hdim_R(M)< \infty$. Then 
\[\Ext_R^i(M,N)=0 \text{ for all }i\gg 0 \Longleftrightarrow \Ext_R^i(N,M \ldt_R D)=0 \text{ for all } i  \gg 0. \]
In particular, if $\Hdim=\operatorname{CI-dim}$, the above equivalence holds.
\end{corollary}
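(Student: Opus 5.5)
The plan is to reduce directly to Theorem \ref{main}. For this it suffices to check two facts: that $\operatorname{AB-dim}_R(M)<\infty$, and that $\operatorname{AB-dim}_R(\RHom_R(M,R))<\infty$.

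\textbf{Step 1.} By Setup \ref{setup}, $\operatorname{AB-dim}_R(M)\leq \Hdim_R(M)<\infty$ for the given complex $M$.

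\textbf{Step 2.} The invariant $\Hdim$ satisfies conditions (1)--(3) and (a) of Lemma \ref{lemma:equivalences}, so that lemma yields (b). Since $\Hdim_R(M)<\infty$, (b) gives $\Hdim_R(\RHom_R(M,R))<\infty$. Here $\RHom_R(M,R)\in D_b^f(R)$: $\gdim_R(M)\leq \Hdim_R(M)<\infty$ by (1), and finite G-dimension forces $\RHom_R(M,R)\in D_b^f(R)$. Applying the inequality of Setup \ref{setup} to $\RHom_R(M,R)$ then gives $\operatorname{AB-dim}_R(\RHom_R(M,R))<\infty$.

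\textbf{Step 3.} Both hypotheses of Theorem \ref{main} now hold, so the equivalence
\[\Ext_R^i(M,N)=0 \text{ for all } i\gg0 \Longleftrightarrow \Ext_R^i(N,M\ldt_R D)=0 \text{ for all } i\gg0\]
follows immediately.

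For the case $\Hdim=\operatorname{CI-dim}$, two things must be verified: that CI-dim fits Setup \ref{setup}, and that $\operatorname{AB-dim}_R(M)\leq \operatorname{CI-dim}_R(M)$ for all $M\in D_b^f(R)$.

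\begin{itemize}
\item Remark \ref{ci-dim} already records that CI-dim satisfies (1)--(3) and (a) of Lemma \ref{lemma:equivalences}.
\item The inequality $\operatorname{AB-dim}_R(M)\leq \operatorname{CI-dim}_R(M)$ is exactly Proposition \ref{prop:ineq2}(1).
\end{itemize}

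Hence the general statement applies to CI-dim. I do not expect a real obstacle. The only point needing care is that $\RHom_R(M,R)$ lies in $D_b^f(R)$, so that the invariants are defined on it, and this is handled in Step 2.
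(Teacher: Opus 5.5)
Your proposal is correct and follows essentially the same route as the paper: bound $\operatorname{AB-dim}_R(M)$ by $\Hdim_R(M)$, use Lemma \ref{lemma:equivalences} to get $\Hdim_R(\RHom_R(M,R))<\infty$ and hence finite AB-dimension, apply Theorem \ref{main}, and handle CI-dim via Remark \ref{ci-dim} and Proposition \ref{prop:ineq2}(1). Your extra check that $\RHom_R(M,R)\in D_b^f(R)$, coming from finite G-dimension, is a detail the paper leaves implicit.
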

\begin{proof}
We have $\operatorname{AB-dim}_R(M) \leq \Hdim_R(M)< \infty$ by assumption.  By Lemma \ref{lemma:equivalences} we have $\Hdim_R(\RHom_R(M,R))< \infty$ and so $\operatorname{AB-dim}_R(\RHom_R(M,R))< \infty$. The conclusion then follows by Theorem \ref{main}. The second part follows then using Remark \ref{ci-dim} and Proposition \ref{prop:ineq2}(1).
\end{proof}
Let us suggest a potential approach to drop the assumption $\operatorname{AB-dim}_R(\RHom_R(M,R))< \infty$ in Theorem \ref{main}. In Section \ref{section:AB-dim}, we showed that the AB-dimension satisfies conditions (1-2) in Lemma \ref{lemma:equivalences}. Also, (3) is straightforward to verify. 
By Lemma \ref{lemma:equivalences}, Question \ref{question5.5} is then equivalent to asking whether, for a finitely generated $R$-module $M$, the condition $\operatorname{AB-dim}_R(M)=0$ implies $\operatorname{AB-dim}_R(M^*)=0$. Thus, to address Question \ref{question5.5} and possibly drop the assumption $\operatorname{AB-dim}_R(\RHom_R(M,R))< \infty$ in Theorem \ref{main}, we may instead ask the following equivalent question in terms of finitely generated modules:

\begin{question}
Let $M$ be a finitely generated $R$-module with $\gdim_R(M)=0$. If $b_M^R < \infty$, does it follow that $b_{M^*}^R < \infty$?
\end{question}

\subsection{Symmetry in the vanishing of Ext over Noetherian rings} In this subsection, we establish from our main theorem several cases in which symmetry in the vanishing of Ext holds over a Noetherian ring. At the end of this subsection, we give an example satisfying the conditions considered in our results.

\begin{lemma}\label{LarsLemma}
Let $R$ be a Noetherian ring  with a dualizing complex $D$, and let $M$ be a complex in  $D^f(R)$. If $M \ldt_R D$ is homologically bounded to the right, then $M$ is homologically bounded to the right. 
\end{lemma}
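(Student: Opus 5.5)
The plan is to split $M$ by a soft truncation and show that the part in very low degrees must vanish. The reason this should work is that $D$ is homologically bounded with finitely generated, locally nonzero homology, so tensoring with $D$ cannot kill a nonzero complex whose homology is bounded above.

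\textbf{Step 1: choosing the truncation.} Since $M\ldt_R D$ is homologically bounded to the right, fix $n$ with $H_i(M\ldt_R D)=0$ for all $i<n$. Choose an integer $m$ with $m-1+\sup(D)<n$. Form the exact triangle
\[M'\to M\to M''\leadsto,\]
where $M'=M_{\supseteq m}$ has homology concentrated in degrees $\geq m$ and $M''$ has homology $H_i(M'')=H_i(M)$ for $i<m$ and zero otherwise. Both complexes lie in $D^f(R)$, and $\sup(M'')\leq m-1$.

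\textbf{Step 2: $M''\ldt_R D$ is acyclic.} Tensoring the triangle with $D$ gives
\[M'\ldt_R D\to M\ldt_R D\to M''\ldt_R D\leadsto.\]
Because $M'$ and $D$ are both homologically bounded to the right, we have $\inf(M'\ldt_R D)\geq m+\inf(D)$. Because $M''$ and $D$ are homologically bounded to the left, the standard estimate gives $\sup(M''\ldt_R D)\leq m-1+\sup(D)<n$. I expect to need these degree estimates only in the form of \cite{LarsBook}, computed with a semi-flat resolution of $M'$ or $M''$.

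Now read the long exact homology sequence in degrees below $n$. There $H_i(M\ldt_R D)=0$. Since $M''\ldt_R D$ has no homology in degrees $\geq n$, all of its homology sits in degrees below $n$. In those degrees it injects, via the connecting map, into $H_{i-1}(M'\ldt_R D)$.

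This step needs more care than it first appears. The cleanest fix is to take $m$ much smaller, so that $m-1+\sup(D)<\min\{n,\ m+\inf(D)\}$ fails to be the issue. Concretely: the morphism $M''\ldt_R D\to\Sigma (M'\ldt_R D)$ goes from a complex with homology in degrees $\leq m-1+\sup(D)$ to one with homology in degrees $\geq m+1+\inf(D)$. Since $\sup(D)-\inf(D)=\operatorname{amp}(D)$ is fixed, I would instead cut $M$ at two levels, $m\ll n$, and exploit the homology gap. The bookkeeping is the following. Any nonzero $H_i(M''\ldt_R D)$ with $i<n$ must map isomorphically onto $H_{i-1}(M'\ldt_R D)$. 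Taking $i$ equal to the top degree of $M''\ldt_R D$ and $m$ so small that this top degree is $<m+\inf(D)+1$, the target vanishes. Hence $H_i(M''\ldt_R D)=0$ for every $i$, so $M''\ldt_R D\simeq 0$.

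\textbf{Step 3: $M''\simeq 0$.} Suppose $M''\not\simeq0$, and let $s=\sup(M'')$, which is finite. Pick $\mathfrak p\in\operatorname{Supp}H_s(M'')$ and localize. Over the local ring $R_\mathfrak p$, both $M''_\mathfrak p$ and $D_\mathfrak p$ are bounded to the left with finitely generated homology. Also $D_\mathfrak p$ is a dualizing complex for $R_\mathfrak p$, so it is nonzero. For such complexes the top homology satisfies
\[H_{s+\sup D_\mathfrak p}(M''_\mathfrak p\ldt D_\mathfrak p)\cong H_s(M''_\mathfrak p)\otimes H_{\sup D_\mathfrak p}(D_\mathfrak p).\]
This is nonzero by Nakayama's lemma, contradicting Step 2. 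Hence $M''\simeq 0$, so $M\simeq M'$ is homologically bounded to the right.

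\textbf{Main obstacle.} The delicate point is Step 2: arranging the choice of $m$ so that the connecting map cannot absorb the homology of $M''\ldt_R D$. Because $D$ has finite amplitude, choosing $m$ sufficiently far below $n$ should make the degree ranges disjoint. I would check this inequality carefully first.
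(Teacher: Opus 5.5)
There is a genuine gap, and it sits exactly at the point you flagged. In Step 2 the ``standard estimate'' $\sup(M''\ldt_R D)\le \sup M''+\sup D$ is false. For complexes bounded to the left there is no such bound; bounding $\sup$ of a derived tensor product needs finite flat dimension of one factor. For instance, if $R$ is local and not Gorenstein, then $\sup(k\ldt_R D)=\operatorname{fd}_R D=\infty$. So nothing controls $H_i(M''\ldt_R D)$ in degrees $i\ge\min\{n,\,m+1+\inf D\}$, and the long exact sequence gives no vanishing there. Even granting the estimate, the bookkeeping cannot work: you would need $m-1+\sup D<m+1+\inf D$, i.e.\ $\sup D-\inf D\le 1$. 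The parameter $m$ cancels, so pushing $m$ down never separates the two ranges.

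Step 3 uses a formula at the wrong end. The K\"unneth-type isomorphism holds in the \emph{lowest} degree for complexes bounded to the right. In the top degree of left-bounded complexes, higher $\Tor$'s of lower homology modules also contribute; for example, over $R=k[x]_{(x)}$ one has $H_0(k\ldt_R(R\oplus\Sigma^{-1}k))\cong k^2\neq k\otimes_R R$. More fundamentally, the truncation buys nothing: $M''$ carries all of the possibly right-unbounded homology of $M$, so proving $M''\simeq 0$ is no easier than the lemma itself.

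The missing idea is to work at the bottom degree, locally, with a derived Nakayama lemma that does not presuppose boundedness to the right. For $R$ local, $0\neq X\in D^f(R)$ and $0\neq Y\in D^f_b(R)$, one has $\inf(X\ldt_R Y)=\inf X+\inf Y$; in particular $\inf X=-\infty$ forces $\inf(X\ldt_R Y)=-\infty$. One way to see the nontrivial inequality:
\begin{itemize}
\item Let $K$ be the Koszul complex on generators of the maximal ideal. Nakayama applied one element at a time shows $H_i(K\otimes_R X)\neq 0$ whenever $H_i(X)\neq 0$.
\item Since $K$ is finitely built from $\Sigma^j k$ with $j\ge 0$, this gives $\inf(k\ldt_R X)\le \inf X$.
\item Hence $\inf(X\ldt_R Y)\le\inf(k\ldt_R X\ldt_R Y)=\inf(k\ldt_R X)+\inf(k\ldt_R Y)\le\inf X+\inf Y$.
\end{itemize}
This is what the paper takes from \cite[Proposition 16.2.5(a) and Theorem 16.2.9]{LarsBook}. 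Since homology vanishes iff it vanishes locally, $\inf(M\ldt_R D)=\inf_{\mathfrak p}\inf(M_{\mathfrak p}\ldt_{R_{\mathfrak p}}D_{\mathfrak p})$. So for every $\mathfrak p\in\operatorname{Supp}M$,
\[
\inf M_{\mathfrak p}\ge\inf(M\ldt_R D)-\inf D_{\mathfrak p}\ge \inf(M\ldt_R D)-\sup D,
\]
whence $\inf M\ge\inf(M\ldt_R D)-\sup D$, with no truncation at all. A posteriori this shows your $M''$ vanishes once $m\le n-\sup D$, but your argument has no way to reach that conclusion.
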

\begin{proof}  By \cite[Corollary 14.1.13]{LarsBook}, the following equality holds:
$$\inf \{ \inf (M_\mathfrak{p} \ldt_{R_\mathfrak{p}} D_\mathfrak{p})  \mid  \mathfrak{p} \in \Spec(R) \} = \inf (M \ldt_R D).$$
Moreover, by \cite[Proposition 16.2.5(a) and Theorem 16.2.9]{LarsBook}, for each $\mathfrak{p} \in \operatorname{Supp} (M)$, we have that 
$$\inf (M_\mathfrak{p} \ldt_{R_\mathfrak{p}} D_\mathfrak{p})=\inf (M_\mathfrak{p})+\inf (D_\mathfrak{p}), $$
and hence $$\inf (M_\mathfrak{p}) \geq \inf (M \ldt_R D)-\inf (D_\mathfrak{p}) \geq \inf (M \ldt_R D)-\sup (D_\mathfrak{p}) \geq \inf (M \ldt_R D)-\sup(D).$$ 
Therefore, $\inf (M) \geq \inf (M\ldt_R D)-\sup (D)$ by \cite[Corollary 14.1.13]{LarsBook}. This show that $M$ is homologically bounded to the right. 
\end{proof}


The next corollaries provide conditions for symmetry in the vanishing of Ext without assuming that the base ring is Gorenstein.
\begin{corollary}\label{corollary:gpd}
Let $R$ be a Noetherian ring with a dualizing complex $D$, and let $M$ and $N$ be complexes in $D_b^f(R)$ with $\operatorname{Gpd}_R(\RHom_R(N,M))< \infty$. If $\operatorname{AB-dim}_R(M)<\infty$ and $\operatorname{AB-dim}_R(\RHom_R(M,R))< \infty$, then
\[\Ext_R^i(M,N)=0 \text{ for all }i\gg 0 \Longleftrightarrow \Ext_R^i(N,M )=0 \text{ for all } i  \gg 0.\]
\end{corollary}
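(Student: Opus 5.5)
The plan is to reduce the statement to Theorem \ref{main}. Under the hypotheses on $M$, that theorem already gives
\[\Ext_R^i(M,N)=0 \text{ for } i\gg 0 \iff \Ext_R^i(N,M\ldt_R D)=0 \text{ for } i\gg 0.\]
So it suffices to show that, under $\operatorname{Gpd}_R(\RHom_R(N,M))<\infty$,
\[\Ext_R^i(N,M)=0 \text{ for } i\gg 0 \iff \Ext_R^i(N,M\ldt_R D)=0 \text{ for } i\gg 0.\]
Set $X=\RHom_R(N,M)$. Since $N,M\in D_b^f(R)$, $X\in D^f(R)$, and $H_i(X)=0$ for $i>\sup M-\inf N$. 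Hence $X$ is always homologically bounded on the left, and $\Ext_R^i(N,M)=0$ for $i\gg0$ is equivalent to $X\in D_b^f(R)$. Likewise, the vanishing of $\Ext_R^i(N,M\ldt_R D)$ for $i\gg0$ is equivalent to $\RHom_R(N,M\ldt_R D)$ being homologically bounded on the right.

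The first key step is the tensor evaluation isomorphism
\[X\ldt_R D=\RHom_R(N,M)\ldt_R D\simeq \RHom_R(N,M\ldt_R D).\]
I would justify it concretely. Take a degreewise finitely generated semi-projective resolution $P\to N$ with $P_i=0$ for $i<\inf N$, and bounded representatives of $M$ and $D$. In each degree, $\Hom_R(P,M)_n=\prod_i\Hom_R(P_i,M_{i+n})$ is a finite product of modules $\Hom_R(P_i,M_{i+n})$ with $P_i$ finite free. Hence the natural map $\Hom_R(P,M)\otimes_R D\to\Hom_R(P,M\otimes_R D)$ is an isomorphism of complexes. One also has to check that $\Hom_R(P,M)\otimes_R D$ computes the derived tensor product and that $M\otimes_R D$ computes $M\ldt_R D$. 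For this I would replace $D$ by a bounded complex of injectives, or argue by boundedness as in the tensor evaluation results of \cite{LarsBook}. I expect this bookkeeping to be the main technical obstacle, although it is essentially standard tensor evaluation.

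With the isomorphism in hand, the two directions go as follows.

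($\Rightarrow$) If $\Ext_R^i(N,M)=0$ for $i\gg0$, then $X\in D_b^f(R)$ and $\operatorname{Gpd}_R(X)<\infty$. By Foxby equivalence (Christensen–Frankild–Holm), $X$ lies in the Auslander class of $D$, so $X\ldt_R D$ is homologically bounded. By the isomorphism above, $\Ext_R^i(N,M\ldt_R D)=0$ for $i\gg0$.

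($\Leftarrow$) If $\Ext_R^i(N,M\ldt_R D)=0$ for $i\gg0$, then $X\ldt_R D$ is homologically bounded to the right. Lemma \ref{LarsLemma}, applied to $X\in D^f(R)$, then shows that $X$ is homologically bounded to the right, i.e.\ $\Ext_R^i(N,M)=0$ for $i\gg0$. This direction does not use the Gorenstein projective dimension hypothesis.

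Chaining these equivalences with Theorem \ref{main} gives the corollary.
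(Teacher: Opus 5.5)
Your outline matches the paper's proof: Theorem \ref{main}, then the isomorphism $\RHom_R(N,M)\ldt_R D\simeq\RHom_R(N,M\ldt_R D)$, then Foxby equivalence for ($\Rightarrow$) and Lemma \ref{LarsLemma} for ($\Leftarrow$). The gap is the isomorphism. It is not ``essentially standard tensor evaluation'', and it is false without further hypotheses.

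The tensor evaluation results in \cite{LarsBook} need $\operatorname{fd}_R D<\infty$ or $\pd_R N<\infty$. For a dualizing complex, $\operatorname{fd}_R D<\infty$ forces $R$ to be locally Gorenstein. Your degreewise isomorphism $\Hom_R(P,M)\otimes_R D\cong\Hom_R(P,M\otimes_R D)$ is correct. However, $\Hom_R(P,M)$ is unbounded to the right and need not be semi-flat; for $M=R$ it is $P^*=\Hom_R(P,R)$. So $\Hom_R(P,M)\otimes_R D$ need not compute $\RHom_R(N,M)\ldt_R D$. Replacing $D$ by a bounded complex of injectives does not help, since injectives are not flat. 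Replacing $M$ by a semi-projective resolution destroys the finite products your argument relies on.

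Here is a concrete failure. Let $R$ be local and not Gorenstein, with dualizing complex $D$, and take $N=k$ and $M=R$. Then $\operatorname{AB-dim}_R(M)=\operatorname{AB-dim}_R(\RHom_R(M,R))=0$. Also $\RHom_R(k,R\ldt_R D)\simeq\RHom_R(k,D)$ is a shift of $k$, hence bounded. If it were isomorphic to $\RHom_R(k,R)\ldt_R D$, Lemma \ref{LarsLemma} would give $\Ext_R^i(k,R)=0$ for $i\gg 0$, so $R$ would be Gorenstein.

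The same example refutes your claim that ($\Leftarrow$) does not use the hypothesis $\operatorname{Gpd}_R(\RHom_R(N,M))<\infty$. There, $\Ext_R^i(N,M\ldt_R D)=0$ for $i\gg 0$, but $\Ext_R^i(N,M)\neq 0$ for infinitely many $i$. All hypotheses of the corollary hold except the finiteness of $\operatorname{Gpd}_R(\RHom_R(N,M))$.

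In the paper, the isomorphism is taken from \cite[Theorem 3.4]{MMDN} in the presence of that hypothesis, and it is used in both directions. The example shows this is where the hypothesis does essential work, beyond the Auslander-class step in ($\Rightarrow$). To close the gap, you must either invoke that result or prove tensor evaluation for $\RHom_R(N,M)$ using its finite Gorenstein projective dimension. This is substantive, not bookkeeping.
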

\begin{proof}
The condition $\Ext_R^i(M,N)=0$ for all $i \gg 0$, that is $\RHom_R(M,N) \in D_b^f(R)$, is equivalent to $\RHom_R(N,M \ldt_R D) \in D_b^f(R)$ by Theorem \ref{main}. By \cite[Theorem 3.4]{MMDN},  
\[\RHom_R(N,M \ldt_R D) \simeq \RHom_R(N,M) \ldt_R D,\]
and then $\Ext_R^i(M,N)=0$ for all $i \gg0$ is equivalent to $\RHom_R(N,M) \ldt_R D \in D_b^f(R)$. By Lemma \ref{LarsLemma} and \cite[Theorem 19.1.12]{LarsBook}, this is equivalent to $\RHom_R(N,M) \in D_b^f(R)$, that is, to $\Ext_R^i(N,M)=0$ for all $i \gg 0$.
\end{proof}
\begin{corollary}\label{cor:symmetrygpd}
    Let $R$ be a Noetherian ring with a dualizing complex, and let $M$ and $N$ be complexes in $D_b^f(R)$ with $\operatorname{Gpd}_R(\RHom_R(N,M))< \infty$. Then
\[\Ext_R^i(M,N)=0 \text{ for all }i\gg 0 \Longleftrightarrow \Ext_R^i(N,M )=0 \text{ for all } i  \gg 0.\]
holds under any of the following hypotheses
\begin{enumerate}
    \item $R$ satisfies \textnormal{(AC)} and $\gdim_R(M)< \infty$.
    \item $\operatorname{CI-dim}_R(M)< \infty$.
    \item $M$ is a finitely generated $R$-module, $\operatorname{qpd}_R(M)< \infty$ and $\gdim_R(M)< \infty$.
\end{enumerate}
\end{corollary}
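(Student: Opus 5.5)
The plan is to reduce Corollary \ref{cor:symmetrygpd} to Corollary \ref{corollary:gpd}. That result already gives the desired symmetry once we know $\operatorname{AB-dim}_R(M)<\infty$ and $\operatorname{AB-dim}_R(\RHom_R(M,R))<\infty$, under the standing assumption $\operatorname{Gpd}_R(\RHom_R(N,M))<\infty$. So in each of the three cases (1)--(3) it suffices to verify these two finiteness conditions. These are exactly the verifications already carried out in the proofs of Corollaries \ref{corollary}, \ref{cor:hdim} and \ref{cor:qpd}.

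\emph{Case (1).} Assume $R$ satisfies (AC) and $\gdim_R(M)<\infty$. By \cite[Theorem 19.4.18]{LarsBook}, $\gdim_R(\RHom_R(M,R))<\infty$ as well. Applying \cite[Proposition 3.6]{levins} under (AC) gives $B_M^R<\infty$ and $B^R_{\RHom_R(M,R)}<\infty$. Hence both AB-dimensions are finite, exactly as in the proof of Corollary \ref{corollary}.

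\emph{Case (2).} Assume $\operatorname{CI-dim}_R(M)<\infty$. Proposition \ref{prop:ineq2}(1) gives $\operatorname{AB-dim}_R(M)\le \operatorname{CI-dim}_R(M)<\infty$. By Remark \ref{ci-dim}, CI-dim satisfies the hypotheses of Lemma \ref{lemma:equivalences}, including (a), so $\operatorname{CI-dim}_R(\RHom_R(M,R))<\infty$. Applying Proposition \ref{prop:ineq2}(1) once more yields $\operatorname{AB-dim}_R(\RHom_R(M,R))<\infty$. This is the same as the CI-dim part of Corollary \ref{cor:hdim}, i.e.\ Setup \ref{setup} with $\Hdim=\operatorname{CI-dim}$.

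\emph{Case (3).} Assume $M$ is a finitely generated module with $\operatorname{qpd}_R(M)<\infty$ and $\gdim_R(M)<\infty$. Then \cite[Corollary 6.4]{GJT} gives $b_M^R<\infty$, hence $\operatorname{AB-dim}_R(M)<\infty$. Lemma \ref{victor:lemma} gives $\operatorname{AB-dim}_R(\RHom_R(M,R))<\infty$.

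In all three cases, Corollary \ref{corollary:gpd} then yields the equivalence. There is no real obstacle, since every needed fact is established earlier in the paper. The only point needing a little care is case (2): one must pass from finite CI-dimension of $\RHom_R(M,R)$ to finite AB-dimension through the inequality in Proposition \ref{prop:ineq2}(1), which holds for arbitrary complexes in $D_b^f(R)$.
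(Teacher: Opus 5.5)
Your proposal is correct and follows the paper's own proof: in each of the three cases you reduce to Corollary \ref{corollary:gpd} by checking that $\operatorname{AB-dim}_R(M)$ and $\operatorname{AB-dim}_R(\RHom_R(M,R))$ are finite. You do this with the same ingredients the paper cites, namely the argument of Corollary \ref{corollary} for (1), Remark \ref{ci-dim} with Proposition \ref{prop:ineq2}(1) for (2), and \cite[Corollary 6.4]{GJT} with Lemma \ref{victor:lemma} for (3).
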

\begin{proof}
    (1) The proof follows the same lines as (1) $\Leftrightarrow$ (2) in Corollary \ref{corollary}, using Corollary \ref{corollary:gpd} in place of Theorem \ref{main}.

    (2) It follows by Corollary \ref{corollary:gpd}, Remark \ref{ci-dim} and Proposition \ref{prop:ineq2}(1).

    (3) Note that $b_M^R< \infty$ (\cite[Corollary 6.4]{GJT}) and then $\operatorname{AB-dim}_R(M)< \infty$. The result then follows using Lemma \ref{victor:lemma} and then Corollary \ref{corollary:gpd}.
\end{proof}
Due to the condition $\operatorname{Gpd}_R(\RHom_R(N,M))< \infty$ in Corollaries \ref{corollary:gpd} and \ref{cor:symmetrygpd}, we ask when this might happen. The next remark shows a particular case. The argument below was shared with us by Kaito Kimura.
\begin{remark}\label{remark:GpdRHom} 

Let $R$ be a Noetherian ring with finite Krull dimension, and let $M$ and $N$ be complexes in $D_b^f(R)$. If $\Ext_R^i(N,M)$ has finite Gorenstein dimension for all $i \in \mathbb{Z}$, then $\operatorname{Gpd}_R(\RHom_R(N,M))$ is finite. Up to quasi-isomorphism and shifting, we may assume that $\RHom_R(N,M)_n = 0$ for $n>0$. Since $\Ext_R^i(N,M)$ has finite Gorenstein dimension for all $i \geq 0$, $\RHom_R(N,M)_{\supseteq n}$ has finite Gorenstein dimension for all $n \leq 0$ (see \cite[Lemma 2.1(2)]{HO-VH-VD}). Then, \cite[Proposition 9.4.1]{LarsBook} gives the following inequality for $n \leq 0$
\begin{align}\label{g-dimineq}
\gdim_R(\RHom_R(N,M)_{\supseteq n}) \leq  \operatorname{FPD}(R) + \sup \RHom_R(N,M)_{\supseteq n} \leq \operatorname{FPD}(R), 
\end{align}
where $\operatorname{FPD}(R)$ denotes the \textit{finitistic projective dimension} of $R$ (\cite[Definition 8.5.16]{LarsBook}). Since $R$ has finite Krull dimension, then $\operatorname{FPD}(R)<\infty$; see \cite[Proposition 17.4.27]{LarsBook}.

By \cite[Example 3.3.34 and Proposition 3.3.37]{LarsBook}, there is a short exact sequence
\[0\rightarrow \coprod_{n\leq 0} \RHom_R(N,M)_{\supseteq n} \rightarrow \coprod_{ n\leq 0} \RHom_R(N,M)_{\supseteq n} \rightarrow \RHom_R(N,M)\rightarrow 0,\]
and by \cite[Proposition 9.1.23]{LarsBook} and  the inequality in (\ref{g-dimineq})
\[\operatorname{Gpd}_R \left( \coprod_{n \leq 0} \RHom_R(N,M)_{\supseteq n} \right) = \sup_{n\leq 0} \lbrace \operatorname{Gpd}_R(\RHom_R(N,M)_{\supseteq n}) \rbrace\leq \operatorname{FPD}(R).\]
Therefore $\RHom_R(N,M)$ has finite Gorenstein projective dimension using the short exact sequence above and the two-out-of-three property for Gorenstein projective dimension.
\end{remark}

The next example illustrates the situation in Remark \ref{remark:GpdRHom} and Corollary \ref{cor:symmetrygpd}(2,3) over a non-Gorenstein local ring. We note that the results in the literature about the symmetry of the vanishing of Ext do not apply in this context.

\begin{example}\label{ex:nongorensteinSym}
Let $k$ be a field. Consider the following ring and the following modules
\begin{align*}
    R= \frac{k[[r,s,t,x]]}{(r^2,rs,s^2,t^2,x^2)}, \quad M= \frac{R}{(x)} \quad \text{and} \quad N=\frac{R}{(t)}.
\end{align*}
$R$ is a non-Gorenstein Artinian local ring (and hence Cohen-Macaulay). We claim that: 
\begin{enumerate}
    \item $\operatorname{CI-dim}_R(M) = \operatorname{CI-dim}_R(N)=0$.
    \item $\operatorname{Ext}_R^i(M,N)=\Ext_R^i(N,M)=0$ for all $i>0$ and $\pd_R(M)=\pd_R(N)= \infty$.
    \item $\operatorname{CI-dim}_R(\Hom_R(N,M))=0$ and hence $\RHom_R(N,M) \simeq \Hom_R(N,M)$ has $\operatorname{G}$-dimension zero.

    \item $\operatorname{qpd}_R(M)=\operatorname{qpd}_R(N)=0$.
\end{enumerate}
\end{example}

\begin{proof}

(1) Considering $Q= \frac{k[[r,s,t,x]]}{(r^2,rs,s^2,t^2)}$, we may set a trivial quasi-deformation $R \rightarrow  R \leftarrow Q$. Note that $\pd_Q(R)=1$ and $M=R/(x) \cong Q/(x)$. So, $\pd_Q(M)-\pd_Q(R)=1-1=0$ and $\operatorname{CI-dim}_R(M)=0$. A similar argument shows that $\operatorname{CI-dim}_R(N)=0$.

(2) Consider the minimal free resolution: 
\begin{align*}
    F= \cdots \rightarrow R \xrightarrow{x} R \xrightarrow{x} R \xrightarrow{x} R \rightarrow 0
\end{align*}
of $M$ over $R$. So, $\pd_R(M)= \infty$. The complex $\Hom_R(F,N)$ is 
\begin{align*}
    0 \rightarrow N \xrightarrow{x} N \xrightarrow{x} N\xrightarrow{x} N \rightarrow \cdots.
\end{align*}
Therefore, computing homologies we see that $\Ext_R^i(M,N)=xN/xN=0$ for all $i>0$. Again, a similar argument shows that $\Ext_R^i(N,M)=0$ for all $i>0$ and $\pd_R(N)= \infty$.

(3) One can see that $\Hom_R(N,M)  \cong tM\cong   M/tM \cong R/(x,t)$.   Setting $S=\frac{k[[r,s,t,x]]}{(r^2,rs,s^2)}$, we may consider a trivial quasi-deformation $R \rightarrow R \leftarrow S$. Note that $\Hom_R(N,M) \cong R/(x,t) \cong S/(x,t)$ and that $\pd_S(R)=2$. Thus, $\pd_S(\Hom_R(N,M))-\pd_S(R)=2-2=0$ and $\operatorname{CI-dim}_R(\Hom(N,M))=0$.

(4) By \cite[Proposition 3.6]{GJT}, the claim follows from the fact that $M$ and $N$ admit periodic resolutions.
\end{proof}

\begin{agra} We would like to thank David Jorgensen for many valuable discussions about this paper and for helping us with Example \ref{ex:nongorensteinSym}. The authors thank Kaito Kimura for valuable comments and for sharing the argument in Remark \ref{remark:GpdRHom} with us. The authors also wish to thank Lars Christensen for his comments on a first draft of this paper. This work was done while the second author was visiting David Jorgensen at the University of Texas at Arlington. He gratefully acknowledges Jorgensen's guidance and the many valuable discussions throughout his stay.
\end{agra}

\begin{funding}
The second author was supported by grants 2022/12114-0 and 2024/17809-1, São Paulo Research Foundation (FAPESP). The third author was supported by grant 2022/03372-5, São Paulo Research Foundation (FAPESP). 
\end{funding}


\begin{thebibliography}{PTW02}



\bibitem{araya} T. Araya. {\em A Homological Dimension Related to AB Rings}, Beitr. Algebra Geom., vol. 60, no. 2, June 2019, pp. 225–31.




\bibitem{AC1} M. Auslander, I. Reiten. {\em Selected Works of Maurice Auslander}, American Mathematical Society, 1999. Collected Works Series 10.




\bibitem{avramov} L. L. Avramov, R.-O. Buchweitz. {\em Support varieties and cohomology over complete intersections},  Invent. Math., vol. 142, no. 2, Nov. 2000, pp. 285–318.




\bibitem{BERGH} P. A. Bergh. {\em Modules with reducible complexity}, J. Algebra, vol. 310, no. 1, April 2007, pp. 132-147. 




\bibitem{be-jo} P. A. Bergh, D. A. Jorgensen. {\em On the Vanishing of Homology for Modules of Finite Complete Intersection Dimension}, J. Pure Appl. Algebra, vol. 215, no. 3, Mar. 2011, pp. 242–52.










\bibitem{LarsBook} L. W. Christensen, H.-B. Foxby, H. Holm. {\em Derived Category Methods in Commutative Algebra}, 1st ed, Springer, 2024. Springer Monographs in Mathematics Series.




\bibitem{CH} L. W. Christensen, H. Holm. {\em Algebras That Satisfy Auslander’s Condition on Vanishing of Cohomology}, Math. Z., vol. 265, no. 1, May 2010, pp. 21–40.




\bibitem{LarsHolmAC} L. W. Christensen, H. Holm. {\em Vanishing of Cohomology over Cohen–Macaulay Rings}, Manuscripta Math., vol. 139, nos. 3–4, Nov. 2012, pp. 535–44.




\bibitem{LarsDave} L. W. Christensen, D. A. Jorgensen. {\em Tate (Co)Homology via Pinched Complexes}, Trans. Amer. Math. Soc., vol. 366, no. 2, May 2013, pp. 667–89.




\bibitem{GJT} M. Gheibi, D. A. Jorgensen, R. Takahashi. {\em Quasi-Projective Dimension}, Pacific J. Math., vol. 312, no. 1, Aug. 2021, pp. 113–47.




\bibitem{Ghosh-Puth} D. Ghosh, T. J. Puthenpurakal. {\em Gorenstein Rings via Homological Dimensions, and Symmetry in Vanishing of Ext and Tate Cohomology}, Algebr. Represent. Theory, vol. 27, no. 1, Feb. 2024, pp. 639–53.



\bibitem{HO-VH-VD} R. Holanda, V. H. Jorge-Pérez, V. D. Mendoza-Rubio. {\em  On Extension modules of finite homological dimension}, arxiv preprint arXiv:2504.15224v2.




\bibitem{JorgensenHuneke} C. Huneke, D. A. Jorgensen. {\em Symmetry in the Vanishing of Ext over Gorenstein Rings}, Math. Scand., vol. 93, no. 2, Dec. 2003, pp. 161.








\bibitem{JO-LI} D. A. Jorgensen, L. M. Şega. {\em Nonvanishing Cohomology and Classes of Gorenstein Rings}, Adv. Math., vol. 188, no. 2, Nov. 2004, pp. 470–90.




\bibitem{JO-SE} D. A. Jorgensen, L. M. Şega. {\em Asymmetric complete resolutions and vanishing of ext over Gorenstein rings}, Int. Math. Res. Not. IMRN, vol. 2005, no. 56, Jan. 2005, pp. 3459-3477.




\bibitem{PE-JO} P. Jørgensen. {\em Symmetry Theorems for Ext Vanishing}, J. Algebra, vol. 301, no. 1, July 2006, pp. 224–39.





\bibitem{MMDN} P. Martins, V. D. Mendoza-Rubio, and Z. Nason. {\em Finiteness of complete intersection dimensions of RHom complexes and Ext modules}, arxiv preprint arXiv:2601.07811v2.




\bibitem{IGOR} I. J. Nascimento, V. H. Jorge-Pérez, T. H. Freitas. {\em Some Homological Conjectures over Idealization Rings}, Bull. Braz. Math. Soc. (N.S.), vol. 56, no. 4, Dec. 2025.




\bibitem{TakahashiUAC} S. Nasseh, S. Sather-Wagstaff, R. Takahashi, and K. VandeBogert. {\em Applications and Homological Properties of Local Rings with Decomposable Maximal Ideals}, J. Pure Appl. Algebra, vol. 223, no. 3, Mar. 2019, pp. 1272–87.




\bibitem{NASSEH-SYMMETRY} S. Nasseh, M. Tousi. {\em A Note on Symmetry in the Vanishing of Ext}, Rocky Mountain Journal of Mathematics, vol. 43, no. 1, Feb. 2013.




\bibitem{sanders} W. T. Sanders. {\em Categorical and homological aspects of module theory over commutative rings}, Thesis (Ph.D.) University of Kansas, 2015.






\bibitem{sather} S. Sather-Wagstaff. {\em Complete Intersection Dimensions for Complexes}, J. Pure Appl. Algebra, vol. 190, nos. 1–3, June 2004, pp. 267–90.








\bibitem{levins} A. J. Soto Levins. {\em The Auslander bound for complexes}, Arch. Math. (Basel), vol. 126. no. 1, Jan. 2026, pp. 29-39.





 


  






















\bibitem{veliche} O. Veliche. {\em Gorenstein Projective Dimension for Complexes}, Trans. Amer. Math.
Soc., vol. 358, no. 3, May 2005, pp. 1257–83.




\bibitem{WJ} J. Wei. {\em Auslander Bounds and Homological Conjectures}, Rev. Mat. Iberoam., vol. 27, no. 3, Dec. 2011, pp. 871–84.














\end{thebibliography}
\end{document}